%% Academic Press `C' Style Template File.
%% Current version: May 12, 1999

%%%%%%%%%%%%%%%%%%%%%%%%%%%%%%%%%%%%%%%%%%%%%%%
% LaTeX2e:
%
\documentclass{cjour}
%
%  If you have MathTimes and MathTimesPlus fonts, you
%  may uncomment the line below and use them, but you are
%  not obligated to do so, and most authors do not have
%  these fonts. (You may need to edit m-times.sty to make the
%  font names match those on your system)
%
%  \usepackage[mtbold,noTS1]{m-times}
%
%  Also uncomment \usepackage{cjourps} if you
%  want to use PostScript fonts.
%  (You may need to edit cjourps.sty to make the
%  font names match those on your system)
%
\usepackage{cjourps}
%
%%%%%%%%%%%%%%%%%%%%%%%%%%%%%%%%%%%%%%%%%%%%%%%
% LaTeX2.09:
%
% \documentstyle{cjour}
%
% Comment out \documentstyle above if you also want to use
% PostScript fonts, and uncomment the \documentstyle below.
% (You may need to edit cjourps.sty to make the
%  font names match those on your system)
%
% \documentstyle[cjourps]{cjour}
%
%%%%%%%%%%%%%%%%%%%%%%%%%%%%%%%%%%%%%%%%%%%%%%%%%%%%%%%%%%%%%%

\begin{document}

%%%%% To be entered at Academic Press: =====>>

% \journame{}
% \articlenumber{}
% \yearofpublication{}
% \volume{}
% \cccline{}
% \received{}
% \revised{}
% \accepted{}

\authorrunninghead{Lai and Roach}
\titlerunninghead{Construction of Bivariate Symmetric Wavelets}

% communication line, use: \commline{Communicated by...}
% \commline{ }

%\setcounter{page}{261} %% This command is optional.

%% <<== End of commands to be entered at Academic Press

%%  Authors, start here ==>>

%\draft % Optional, will cause a line at the bottom of each page
%% with the words `Draft' and the time and date that the article
%% was LaTeXed. Will also double space text.

\title{Construction of bivariate symmetric orthonormal wavelets
with short support}

%\subtitle{}

%\author{Ming-Jun Lai\thanks{Supported by the National Science Foundation
%under grant DMS-9870187}}
%\affil{Mathematics Department, University of Georgia}

%%%%%%%%%%%%
%% More than one author with separate affiliations, either:
%
%\author{Ming-Jun Lai$^\dagger$
%\thanks{Supported by the National Science Foundation
%under grant DMS-9870187} and David W. Roach$^\ddagger$}
%\affil{$^{\dagger}$University of Georgia, $^{\ddagger}$Murray State
%University}

% or

\author{Ming-Jun Lai\thanks{Supported by the National Science Foundation
under grant DMS-9870187}}
\affil{University of Georgia}
\and
\author{David W. Roach}
\affil{Murray State University}
%%%%%%%%%%%%

%% \thanks command:
%% Can use \thanks{} in title to have footnote number appear and
%% footnote at the bottom of the page. i.e.,
%% \title{This is the title\thanks{Supported by grant no....}}

%% In \authors or \affil, can use \thanks{} to have asterisk,
%%   dagger or double dagger appear
%%   and text appear at the bottom of the title page. i.e.,

%\authors{D. Adalsteinsson and J. A. Sethian\thanks{Supported in part by the
%Applied Mathematics Subprogram of the...}}

%%%%%%%%%%%%

\email{mjlai@math.uga.edu and david.roach@murraystate.edu}

%optional
%\dedication{Dedicated to...}

\abstract{
In this paper, we give a parameterization of the class of bivariate
symmetric orthonormal scaling functions with filter size $6\times 6$
using the standard dilation matrix $2I$.
In addition, we give two families of refinable
functions which are not orthonormal but have associated tight frames.
Finally, we show that the class of bivariate symmetric
scaling functions with filter size $8\times 8$ can not have two or
more vanishing moments.}

% text should be lower case, unless caps are necessary for meaning
\keywords{bivariate, nonseparable, symmetric, wavelets, vanishing moments}

\begin{article}

% \contents is optional, will make a list of all section heads
% that appear in the article
%\contents
\input epsf.tex
\def\psone#1#2{\centerline{
    \epsfxsize #2in \epsfbox{#1}
    }}
\def\pstwo#1#2#3#4#5#6{
   \centerline{
    #5\epsfxsize=#2in \epsfbox{#1}
    #6\epsfxsize=#4in  \epsfbox{#3}
   }}
\def\xd{3}
%optional
% for those that like to start with section zero:
%\zerosection{Introduction}
%%%%%%%%%%%%%%
\section{Introduction}
%%%%%%%%%%%%%%
\label{Section 1}
The most common wavelets used for image processing are the
tensor-product of univariate compactly supported orthonormal wavelets.
Of this class of wavelets, only the Haar wavelet is symmetric which
gives its associated filter the property of linear phase.  Since
Daubechies' work\cite{D}, numerous generalizations of wavelets have
been developed including  biorthogonal wavelets, multiwavelets,
and bivariate wavelets.
Since 1992, several examples of bivariate compactly supported
orthonormal and biorthogonal wavelets have been constructed. See
Cohen and Daubechies'93 \cite{CD} for nonseparable bidimensional wavelets,
J.~Kova\v{c}evi\'c and M.~Vetterli'92\cite{KV} for nonseparable filters and
wavelets based on a generalized dilation matrix, He and Lai'97\cite{HL97} for
the complete solution of bivariate compactly supported wavelets with filter
size up to $4\times 4$, Belogay and Wang'99\cite{BW} for a special construction
of bivariate nonseparable wavelets for any given regularity, and Ayache'99
\cite{A} for nonseparable dyadic compactly supported wavelets with arbitrary
regularity. See also Cohen and Schlenker'93\cite{CS},
Riemenschneider and Shen'97\cite{RS}, and He and Lai'98\cite{HL98} for
bivariate biorthogonal box spline wavelets.

It is well-known that in the univariate setting, there does not exist
symmetric compactly supported orthonormal wavelets except Haar
for dilation factor $2$. We are interested in the construction of
symmetric wavelets in the bivariate setting with
dilation matrix $2I$ which have compact support and vanishing moments.
We start with a scaling funtion $\phi$. Let
$$
\hat{\phi}(\omega_1, \omega_2)= \prod_{k=1}^\infty m(e^{\omega_1/2^k},
e^{\omega_2/2^k})
$$
be the Fourier transform of $\phi$, where
$$
m(x,y)= \sum_{j=0}^N\sum_{k=0}^N c_{jk} x^j y^k
$$
is a trigonometric polynomial satisfying $m(1,1)=1$.
In addition, the trigonometric polynomial $m(x,y)$
 satisfies the orthonormality condition
$$|m(x,y)|^2+|m(-x,y)|^2+|m(x,-y)|^2+|m(-x,-y)|^2=1.$$
Let $\psi_i(x,y)$ be the corresponding wavelet satisfying
$$
\hat{\psi}_i(\omega_1,\omega_2)= m_i(e^{\omega_1/2},e^{\omega_2/2})
\hat{\phi}(\omega_1/2, \omega_2/2), i=1,2,3,
$$
where the $m_i$ are trigonometric polynomials such that the following
matrix
$$
\left[\begin{array}{cccc}
m(x,y) & m(-x,y) & m(x,-y) & m(-x,-y) \cr
m_1(x,y) & m_1(-x,y) & m_1(x,-y) & m_1(-x,-y) \cr
m_2(x,y) & m_2(-x,y) & m_2(x,-y) & m_2(-x,-y) \cr
m_3(x,y) & m_3(-x,y) & m_3(x,-y) & m_3(-x,-y) \cr
\end{array}\right]
$$
is unitary. Moreover, we are interested in  symmetric
scaling functions $\phi$ with a certain number of vanishing moments
 in the sense that their
associated trigonometric polynomial $m(x,y)$ satisfies
$$
m(1/x,1/y)= x^{-N}y^{-N} m(x,y)
$$
as well as
$$
\left.{\partial^k \over \partial x^k} m(x,y)\right|_{x=-1}=
\left.{\partial^k \over \partial y^k}m(x,y)\right|_{y=-1}=0, \hspace{.25in}
0\le k\le M-1.
$$
The symmetry condition provides $\phi$ with the property of linear phase
and the vanishing moment conditions provide $\phi$ with polynomial reproduction
up to degree $M-1$.
%a trigonometric polynomial $m(x,y)$ associated with a scaling function
%$\phi$ satisfies the symmetry property, then the trigonometric polynomials
%which define the wavelets are easily calculated as
If $m(x,y)$ satisfies the symmetric property, then the associated wavelets
can easily be found by using $\hat{\phi}$ and
\begin{eqnarray*}
m_1(x,y)&=&m(-x,y)\\
m_2(x,y)&=&x\cdot m(x,-y)\\
m_3(x,y)&=&x\cdot m(-x,-y).
\end{eqnarray*}

In summary, we are looking for trigonometric polynomials
$m(x,y)= \displaystyle \sum_{j=0}^N\sum_{k=0}^N c_{jk} x^j y^k$ which
satisfy the following properties:
\begin{enumerate}
\begin{enumerate}
\item Existence: $m(1,1)=1$ \label{(i)}.
\item Orthogonality: $\displaystyle |m(x,y)|^2+|m(-x,y)|^2+|m(x,-y)|^2+
|m(-x,-y)|^2=1$ \label{(ii)}.
\item Symmetry: $m(1/x,1/y)=x^{-N}y^{-N}m(x,y)$\label{(iii)}.
\item $M$ vanishing moments: $m(x,y)=(x+1)^M(y+1)^M\tilde{m}(x,y)$ where
$\tilde{m}(x,y)$ is another trigonometric polynomial.\label{(iv)}
\end{enumerate}
\end{enumerate}

In this paper, we construct a complete parameterization of
all trigonometric polynomials $m(x,y)$  which satisfy
the symmetry condition, the vanishing moment condition, and
the orthonormality condition for $N=5$ and $M=1$.  Within this
class, we identify a two-parameter family which contains the
trigonometric polynomials associated with scaling functions.
Outside of this two-parameter family, we show that the remaining
trigonometric functions are not associated with scaling functions
but instead determine families of tight frames.
Finally, we show that there are no trigonometric polynomials
for $N=7$ and $M=2$, and consequently no symmetric bivariate scaling
functions with two vanishing moments for the support size we are considering.

The paper is organized as follows.  Section 2 gives the parameterized
solution when $N=5$ and $M=1$.  The problem is broken down into
four cases which are dealt with in turn.
Section 3 discusses the orthonormality of the solutions from Section
2 and concludes with a  numerical experiment comparing Haar, D4, and
one solution from Section 2.  The last two sections show
that these trigonometric polynomials cannot have higher vanishing
moments(i.e $M\geq 2$) even for $N=7$.

%%%%%%%%%%%%%%
\section{The $6\times 6$ Case}
%%%%%%%%%%%%%%
\label{Section 2}
Our goal is to parameterize the coefficients of the trigonometric polynomials
which satisfy properties (i)-(iv).
We begin our investigation with trigonometric polynomials whose filter size
is $6\times 6$ with one vanishing moment, i.e. $N=5$ and $M=1$.  The case
$N=1$ is trivially the tensor product Haar function,
and the case $N=3$ has eight singleton solutions given by He and Lai'97
\cite{HL97}.

Let us express $m(x,y)$ in its polyphase form, i.e.,
\begin{eqnarray*}
m(x,y)&=&f_a(x^2,y^2)+xf_b(x^2,y^2)+ yf_c(x^2,y^2)+ xyf_d(x^2,y^2)\\
&=&\left[\begin{array}{c}1\\y\\y^2\\y^3\\y^4\\y^5\end{array}\right]^T
 \left[\matrix{
a_0 & b_0 & a_1 & b_1 & a_2 & b_2\cr
c_0 & d_0 & c_1 & d_1 & c_2 & d_2\cr
a_3 & b_3 & a_4 & b_4 & a_5 & b_5\cr
c_3 & d_3 & c_4 & d_4 & c_5 & d_5\cr
a_6 & b_6 & a_7 & b_7 & a_8 & b_8\cr
c_6 & d_6 & c_7 & d_7 & c_8 & d_8
}\right]\left[\begin{array}{c}1\\x\\x^2\\x^3\\x^4\\x^5\end{array}\right]
\end{eqnarray*}
where
\begin{eqnarray*}
f_\nu(x,y) &=&\nu_0+\nu_1x+\nu_2x^2+\nu_3y+
\nu_4xy+\nu_5x^2y+\nu_6y^2+\nu_7xy^2+\nu_8x^2y^2,
\end{eqnarray*}
for $\nu=a,b,c,d$.

The symmetry condition (iii) reduces the number of unknowns by half since
$m(x,y)$ becomes
$$
m(x,y)=\left[\begin{array}{c}1\\y\\y^2\\y^3\\y^4\\y^5\end{array}\right]^T
 \left[\matrix{
a_0 & b_0 & a_1 & b_1 & a_2 & b_2\cr
b_8 & a_8 & b_7 & a_7 & b_6 & a_6\cr
a_3 & b_3 & a_4 & b_4 & a_5 & b_5\cr
b_5 & a_5 & b_4 & a_4 & b_3 & a_3\cr
a_6 & b_6 & a_7 & b_7 & a_8 & b_8\cr
b_2 & a_2 & b_1 & a_1 & b_0 & a_0
}\right]\left[\begin{array}{c}1\\x\\x^2\\x^3\\x^4\\x^5\end{array}\right].
$$

For convenience, we denote $\displaystyle \sum_{\nu=a,b} \nu = a + b$.
Thus, by (i), we have
\begin{equation}
m(1,1)=2\sum^8_{i=0} \sum_{\nu=a,b} \nu_i = 2\sum^8_{i=0}(a_i + b_i) = 1.
\label{(2.1)}
\end{equation}

By (ii), we have the following 13 nonlinear equations

\begin{eqnarray}
&\displaystyle\sum_{\nu=a,b}& \nu_0 \nu_8 = 0 \label{(2.2)}\\
&\displaystyle\sum_{\nu=a,b}&  \nu_2 \nu_6 = 0 \label{(2.3)}\\
 &\displaystyle\sum_{\nu=a,b}& (\nu_1\nu_6+\nu_2\nu_7) = 0 \label{(2.4)}\\
 &\displaystyle\sum_{\nu=a,b}& (\nu_0\nu_7+\nu_1\nu_8) = 0 \label{(2.5)}\\
 &\displaystyle\sum_{\nu=a,b}& (\nu_2\nu_3+\nu_5\nu_6) = 0 \label{(2.6)}\\
 &\displaystyle\sum_{\nu=a,b}& (\nu_0\nu_5+\nu_3\nu_8) = 0 \label{(2.7)}\\
 &\displaystyle\sum_{\nu=a,b}& (\nu_0\nu_6+\nu_1\nu_7+\nu_2\nu_8) = 0
\label{(2.8)}\\
 &\displaystyle\sum_{\nu=a,b}& (\nu_0 \nu_2 + \nu_3\nu_5 + \nu_6\nu_8) = 0
\label{(2.9)}\\
 &\displaystyle\sum_{\nu=a,b}& (\nu_1\nu_3 + \nu_2\nu_4 + \nu_4\nu_6 +
\nu_5\nu_7) = 0 \label{(2.10)}\\
 &\displaystyle\sum_{\nu=a,b}& (\nu_0\nu_4+\nu_1 \nu_5 + \nu_3 \nu_7+
\nu_4\nu_8) = 0 \label{(2.11)}\\
 &\displaystyle\sum_{\nu=a,b}& (\nu_0\nu_3+\nu_1\nu_4 +\nu_2\nu_5+
\nu_3\nu_6+\nu_4\nu_7+\nu_5\nu_8)=0 \label{(2.12)}\\
 &\displaystyle\sum_{\nu=a,b}& (\nu_0\nu_1+\nu_1\nu_2+\nu_3\nu_4 +
\nu_4\nu_5+\nu_6\nu_7+\nu_7\nu_8) = 0 \label{(2.13)}\\
 &\displaystyle\sum^8_{i=0}&
\displaystyle\sum_{\nu=a,b} \nu^2_i = {1\over 8}. \label{(2.14)}
\end{eqnarray}

The first moment condition (iv) for $M=1$ yields the following
six linear equations:
\begin{eqnarray}
&a_0 + a_1 + a_2 = b_0 + b_1 + b_2 \label{(2.15)}\\
&a_3 + a_4 + a_5 = b_3 + b_4 + b_5 \label{(2.16)}\\
&a_6 + a_7 + a_8 = b_6 + b_7 + b_8 \label{(2.17)}\\
&a_0 + a_3 + a_6 = b_2 + b_5 + b_8 \label{(2.18)}\\
&a_1 + a_4 + a_7 = b_1 + b_4 + b_7 \label{(2.19)}\\
&a_2 + a_5 + a_8 = b_0 + b_3 + b_6. \label{(2.20)}
\end{eqnarray}

We need to find the $a_i$'s and $b_i$'s
which satisfy the equations (\ref{(2.1)})-(\ref{(2.20)})
simultaneously.  We proceed by specifying necessary conditions
derived from these equations.

\begin{lemma}\label{Lemma 2.2}
$\displaystyle \sum^8_{i=0} a_i \ =\
\displaystyle \sum^8_{i=0} b_i\  =\  {1\over 4}$.
\end{lemma}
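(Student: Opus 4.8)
The plan is to reduce the claim to two linear relations between the column sums $A := \sum_{i=0}^8 a_i$ and $B := \sum_{i=0}^8 b_i$. First I would record that the existence condition (\ref{(2.1)}) reads $2(A+B)=1$, i.e.\ $A+B={1\over 2}$. This alone fixes the total mass but not the split between the $a$-sum and the $b$-sum, so one more independent relation is needed.

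The second relation comes from the vanishing moment conditions, and the key observation is that the three equations (\ref{(2.15)}), (\ref{(2.16)}), and (\ref{(2.17)}) partition the indices $0,\dots,8$ into the blocks $\{0,1,2\}$, $\{3,4,5\}$, $\{6,7,8\}$ on both sides simultaneously. Adding them therefore telescopes into $\sum_{i=0}^8 a_i=\sum_{i=0}^8 b_i$, that is, $A=B$. Combining $A+B={1\over 2}$ with $A=B$ gives $A=B={1\over 4}$, which is exactly the assertion of Lemma~\ref{Lemma 2.2}.

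There is no serious obstacle here: the result is a purely linear consequence of (\ref{(2.1)}) together with the first-moment equations, and the nonlinear orthonormality relations (\ref{(2.2)})--(\ref{(2.14)}) play no role at all. The only point to check with any care is that the index blocks on the two sides of (\ref{(2.15)})--(\ref{(2.17)}) genuinely cover each of $a_0,\dots,a_8$ and each of $b_0,\dots,b_8$ exactly once, so that no terms are double counted or omitted when the three equations are summed. As a consistency check one could instead add the ``column-type'' equations (\ref{(2.18)})--(\ref{(2.20)}), whose right-hand sides merely permute the $b_i$ but still exhaust all nine of them, again yielding $A=B$ and hence the same conclusion.
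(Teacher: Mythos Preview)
Your proposal is correct and follows essentially the same approach as the paper: add (\ref{(2.15)})--(\ref{(2.17)}) to obtain $\sum_i a_i=\sum_i b_i$, then invoke (\ref{(2.1)}) to conclude each sum equals $1/4$. Your extra remarks about the index partition and the alternative use of (\ref{(2.18)})--(\ref{(2.20)}) are sound but not needed.
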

\begin{proof} Adding (\ref{(2.15)})- (\ref{(2.17)}) together,
we have $\displaystyle \sum^8_{i=0} a_i = \sum^8_{i=0} b_i$.
By equation (\ref{(2.1)}), the result follows.
\end{proof}

Next, equations (\ref{(2.2)}), (\ref{(2.3)}), (\ref{(2.8)}),
(\ref{(2.9)}), and (\ref{(2.14)}) imply
\begin{equation}
\sum_{\nu=a,b}(\nu_4^2 + (\nu_1 + \nu_7)^2 + (\nu_3 + \nu_5)^2 + (\nu_0 +
\nu_2 + \nu_6 + \nu_8)^2) = {1\over 8}. \label{(2.21)}
\end{equation}
We use equations (\ref{(2.10)}) and (\ref{(2.11)}) to get
\begin{equation}
\sum_{\nu=a,b}(\nu_4(\nu_0 + \nu_2 + \nu_6 + \nu_8) + (\nu_1 + \nu_7)(\nu_3 +
\nu_5)) = 0.\label{(2.22)}
\end{equation}
From equations (\ref{(2.4)}), (\ref{(2.5)}), and (\ref{(2.13)}), we have
\begin{equation}
\sum_{\nu=a,b}((\nu_1+\nu_7)(\nu_0 + \nu_2 + \nu_6 + \nu_8) + \nu_4(\nu_3 +
\nu_5)) = 0.\label{(2.23)}
\end{equation}
Also, we have, using equations (\ref{(2.6)}), (\ref{(2.7)}), and (\ref{(2.12)}),
\begin{equation}
\sum_{\nu=a,b}( \nu_4(\nu_1 + \nu_7)+ (\nu_0 + \nu_2 + \nu_6 + \nu_8)(\nu_3 +
\nu_5)) = 0.\label{(2.24)}
\end{equation}

\begin{lemma}\label{Lemma 2.3}
\begin{eqnarray*}
a_0 + a_2 + a_4 + a_6 + a_8 &=& {1\over 8} + {1\over
4\sqrt{2}} \cos \alpha,\\
b_0 + b_2 + b_4 + b_6 + b_8 &=& {1\over 8} + {1\over
4\sqrt{2}} \sin \alpha.
\end{eqnarray*}
\end{lemma}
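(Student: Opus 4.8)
The plan is to recognize that the two quantities in the statement are exactly $P_a + S_a$ and $P_b + S_b$, where for each $\nu = a,b$ I abbreviate the groupings that already appear in the preceding derivation: $P_\nu = \nu_0 + \nu_2 + \nu_6 + \nu_8$, $Q_\nu = \nu_1 + \nu_7$, $R_\nu = \nu_3 + \nu_5$, and $S_\nu = \nu_4$. Indeed $P_\nu + S_\nu = \nu_0 + \nu_2 + \nu_4 + \nu_6 + \nu_8$, so the lemma is a statement about $P_a + S_a$ and $P_b + S_b$. The decisive structural point is that these four blocks are precisely the combinations occurring in equations (\ref{(2.21)})--(\ref{(2.24)}), and that $P_\nu + Q_\nu + R_\nu + S_\nu = \sum_{i=0}^8 \nu_i = \tfrac14$ by Lemma \ref{Lemma 2.2}, since together the four blocks account for all nine coefficients $\nu_0,\dots,\nu_8$ exactly once.

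First I would form the single combination (\ref{(2.21)}) $+\,2\cdot$(\ref{(2.22)}). Using the identity $(P_\nu + S_\nu)^2 + (Q_\nu + R_\nu)^2 = (P_\nu^2 + Q_\nu^2 + R_\nu^2 + S_\nu^2) + 2(P_\nu S_\nu + Q_\nu R_\nu)$, this collapses to the clean relation
$$(P_a + S_a)^2 + (Q_a + R_a)^2 + (P_b + S_b)^2 + (Q_b + R_b)^2 = \frac18.$$
Writing $\alpha_\nu = P_\nu + S_\nu$ and $\beta_\nu = Q_\nu + R_\nu$, Lemma \ref{Lemma 2.2} supplies the two linear constraints $\alpha_a + \beta_a = \alpha_b + \beta_b = \tfrac14$, which let me eliminate $\beta_a$ and $\beta_b$. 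Substituting $\beta_\nu = \tfrac14 - \alpha_\nu$ into the quadratic relation and completing the square in $\alpha_a$ and $\alpha_b$ should reduce everything to the circle equation
$$\left(\alpha_a - \frac18\right)^2 + \left(\alpha_b - \frac18\right)^2 = \frac{1}{32}.$$
Since $\tfrac{1}{32} = \bigl(\tfrac{1}{4\sqrt2}\bigr)^2$, the pair $\bigl(\alpha_a - \tfrac18,\ \alpha_b - \tfrac18\bigr)$ lies on a circle of radius $\tfrac{1}{4\sqrt2}$; introducing an angle $\alpha$ with $\alpha_a - \tfrac18 = \tfrac{1}{4\sqrt2}\cos\alpha$ and $\alpha_b - \tfrac18 = \tfrac{1}{4\sqrt2}\sin\alpha$ then yields exactly the two asserted formulas.

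I do not anticipate a serious obstacle: the argument uses only the one algebraic combination (\ref{(2.21)}) $+\,2\cdot$(\ref{(2.22)}) together with the linear constraint from Lemma \ref{Lemma 2.2}, after which the conclusion is merely the standard parameterization of a circle. The one point needing care is checking that $(P_\nu + S_\nu) + (Q_\nu + R_\nu)$ genuinely sums every coefficient $\nu_0,\dots,\nu_8$ with weight one, which is what licenses the appeal to Lemma \ref{Lemma 2.2}; it is also worth noting that equations (\ref{(2.23)}) and (\ref{(2.24)}) are \emph{not} needed for this particular reduction, so no further nonlinear information is consumed here.
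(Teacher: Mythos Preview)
Your proposal is correct and follows essentially the same route as the paper: combine (\ref{(2.21)}) with $2\cdot$(\ref{(2.22)}) to obtain $\sum_{\nu}\bigl((P_\nu+S_\nu)^2+(Q_\nu+R_\nu)^2\bigr)=\tfrac18$, invoke Lemma~\ref{Lemma 2.2} to replace $Q_\nu+R_\nu$ by $\tfrac14-(P_\nu+S_\nu)$, and complete the square to the circle $(a^*-\tfrac18)^2+(b^*-\tfrac18)^2=\tfrac{1}{32}$. Your added remark that (\ref{(2.23)}) and (\ref{(2.24)}) are not needed here is accurate and worth keeping.
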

\begin{proof}
By equation (\ref{(2.21)}) and (\ref{(2.22)}), we have
$$
\sum_{\nu=a,b}\left((\nu_4+(\nu_0 + \nu_2 + \nu_6 + \nu_8))^2 + ((\nu_1 + \nu_7) +
(\nu_3 + \nu_5))^2\right) = {1\over 8}.
$$
By Lemma \ref{Lemma 2.2} and letting $a^* = a_0 + a_2 + a_4
+ a_6 + a_8$ and $b^* = b_0 + b_2 + b_4 + b_6 +
b_8$, we have
$$
(a^*)^2 + ({1\over 4} - a^*)^2 + (b^*)^2 +
({1\over 4} - b^*)^2 = {1\over 8}
$$
or
$$
(a^* - {1\over 8})^2 + (b^* - {1\over 8})^2 =
{1\over 32}.
$$
Thus, we are able to conclude the proof.
\end{proof}

Let $\hat{a} = a_1+ a_4 + a_7$.  Observe that equation (\ref{(2.19)})
implies $\hat{a}= b_1 + b_4 + b_7$.
It follows from (\ref{(2.21)}) and (\ref{(2.24)}) that
$$
\sum_{\nu=a,b} \left((\nu_4+(\nu_1 + \nu_7))^2 +
((\nu_0 + \nu_2 + \nu_6 + \nu_8)+
(\nu_3 + \nu_5))^2\right) = {1\over 8}.
$$
Thus, $2(\hat{a})^2 + 2({1\over 4} - \hat{a})^2 =
{1\over 8}$.  That is, we have
$\hat{a} = 0\ \  {\rm or }\  \ {1\over 4}.
$
Similarly, it follows from (\ref{(2.21)}) and (\ref{(2.23)}) that
$$
\sum_{\nu=a,b}\left((\nu_3 + \nu_4 + \nu_5)^2 +
((\nu_0 + \nu_2 + \nu_6 + \nu_8) +(\nu_1 + \nu_7))^2\right) = {1\over 8}.
$$
Let $\tilde{a}=a_3+a_4+a_5=b_3+b_4+b_5$ where the second equality comes from
equation (\ref{(2.16)}).  Thus,
$2(\tilde{a})^2 + 2({1\over 4} - \tilde{a})^2 =
{1\over 8}$.  That is,
$
\tilde{a} = 0\  \ {\rm or } \ \ \tilde{a} =
{1\over 4}.
$
Therefore, we have the following four cases to consider:
\begin{itemize}
\item Case 1: \ \ $a_1+a_4+a_7=1/4$\ \  and\ \  $a_3+a_4+a_5=1/4$
\item Case 2: \ \ $a_1+a_4+a_7=1/4$\ \  and\ \  $a_3+a_4+a_5= 0$
\item Case 3: \ \ $a_1+a_4+a_7= 0$\ \  \hskip .135in and\ \  $a_3+a_4+a_5=1/4$
\item Case 4: \ \ $a_1+a_4+a_7= 0$\ \  \hskip .135in and\ \  $a_3+a_4+a_5= 0$
\end{itemize}

Since Case 3 is a rotation of Case 2, we will only study Cases 1,2, and 4.
As will be discussed, the different cases are actually conditions
associated with the zeros of $m(x,y)$ along the axes.

%%%%%%%%%%%%
\subsection{Complete Solution of Case 1}
%%%%%%%%%%%%
\label{section 2.1}
\noindent Let us first consider Case 1 where both
$a_1+a_4+ a_7 = {1\over 4}$ and $a_3 + a_4 + a_5 = {1\over 4}$.
By Lemmas \ref{Lemma 2.2} and \ref{Lemma 2.3}, we have
$$
a_1 + a_3 + a_5 + a_7\ =\  {1\over 8} - {1\over 4\sqrt{2}}\cos \alpha, \ \ \
b_1 + b_3 + b_5 + b_7\ =\  {1\over 8} - {1\over 4\sqrt{2}}\sin \alpha.
$$
However, $(a_1 +a_4 +a_7)+(a_3 +a_4+a_5) = {1\over 2}$.
It then follows that
$$
2a_4 = {1\over 2} - {1\over 8} + {1\over
4\sqrt{2}}\cos \alpha \ \ \ {\rm or }\ \ \ a_4 =
{3\over 16} + {1\over 8\sqrt{2}} \cos \alpha.
$$
Similarly, $b_4 = {3\over 16} + {1\over 8\sqrt{2}} \sin \alpha$.
Let us now summarize the above discussion combined with Lemma \ref{Lemma 2.3}
in the following lemma.

\begin{lemma}\label{Lemma 3.1}
\begin{eqnarray*}
a_4  = {3\over 16} + {1\over 8\sqrt{2}} \cos \alpha,
&&\hspace{.85in}b_4  = {3\over 16} + {1\over 8\sqrt{2}} \sin\alpha,\\
a_1 + a_7 = {1\over 16} - {1\over 8\sqrt{2}} \cos \alpha,
&&\hspace{.55in} b_1 + b_7 = {1\over 16} - {1 \over 8\sqrt{2}} \sin \alpha,\\
a_3 + a_5 = {1\over 16} - {1\over 8\sqrt{2}} \cos \alpha,
&&\hspace{.55in} b_3 + b_5 = {1\over 16} - {1\over 8\sqrt{2}} \sin\alpha,\\
a_0 + a_2 + a_6 + a_8  =   -{1\over 16} + {1\over 8\sqrt{2}} \cos \alpha,
&& b_0 + b_2 + b_6 + b_8 = -{1\over 16} + {1\over 8\sqrt{2}} \sin \alpha.
\end{eqnarray*}
\end{lemma}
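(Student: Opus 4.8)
The plan is to treat this lemma as a consolidation step: all of the nonlinear content has already been absorbed into the single angular parameter $\alpha$ through Lemmas~\ref{Lemma 2.2} and~\ref{Lemma 2.3}, so what remains is purely linear bookkeeping on the partial sums of the $a_i$ and $b_i$. I would begin by recording the four Case~1 constraints in fully symmetric form. By hypothesis $a_1+a_4+a_7={1\over 4}$ and $a_3+a_4+a_5={1\over 4}$; equations~(\ref{(2.19)}) and~(\ref{(2.16)}) then force $b_1+b_4+b_7={1\over 4}$ and $b_3+b_4+b_5={1\over 4}$ as well.

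Next I would isolate $a_4$ and $b_4$, which is exactly the computation carried out in the paragraph preceding the statement. Adding the two Case~1 sums gives $a_1+a_3+a_5+a_7+2a_4={1\over 2}$. Combining Lemma~\ref{Lemma 2.2} (total mass ${1\over 4}$) with the even-index partial sum $a_0+a_2+a_4+a_6+a_8={1\over 8}+{1\over 4\sqrt{2}}\cos\alpha$ of Lemma~\ref{Lemma 2.3} yields $a_1+a_3+a_5+a_7={1\over 8}-{1\over 4\sqrt{2}}\cos\alpha$, and substituting produces $a_4={3\over 16}+{1\over 8\sqrt{2}}\cos\alpha$; the parallel computation gives $b_4={3\over 16}+{1\over 8\sqrt{2}}\sin\alpha$.

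From here the remaining three rows follow by single subtractions. Subtracting $a_4$ from $a_1+a_4+a_7={1\over 4}$ gives $a_1+a_7$; subtracting it from $a_3+a_4+a_5={1\over 4}$ gives $a_3+a_5$; and subtracting it from the even-index sum of Lemma~\ref{Lemma 2.3} gives $a_0+a_2+a_6+a_8$. The four $b$-identities are obtained identically, with $\cos\alpha$ replaced by $\sin\alpha$ throughout.

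I do not anticipate a genuine obstacle here: the statement introduces no new nonlinear relation, and the only place where care is needed is in tracking the constants ${1\over 16}$, ${1\over 4\sqrt{2}}$, and ${1\over 8\sqrt{2}}$ and keeping the $\cos\alpha$/$\sin\alpha$ pairing correct when passing between the even-index partial sum and $a_4$. All of the real difficulty—reducing the thirteen nonlinear orthogonality equations to the ellipse relation of Lemma~\ref{Lemma 2.3}, and thereby introducing $\alpha$—was completed earlier, so this lemma amounts to a rearrangement of already-established linear identities under the Case~1 hypotheses.
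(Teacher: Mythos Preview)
Your proposal is correct and follows essentially the same route as the paper: the paragraph immediately preceding the lemma statement computes $a_4$ (and $b_4$) exactly as you describe, and the remaining identities are then obtained by the obvious subtractions from the Case~1 hypotheses and from Lemma~\ref{Lemma 2.3}, which is precisely what the paper means by ``summarize the above discussion combined with Lemma~\ref{Lemma 2.3}.'' You have merely spelled out those subtractions explicitly.
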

We are now ready to solve for the 18 unknowns.
The complete parameterization of Case 1 is given in Theorem \ref{Theorem 3.2}.

\begin{theorem}\label{Theorem 3.2}
For any $\beta, \gamma \in [0, 2\pi]$, let

\begin{eqnarray*}
\alpha=2(\beta-\gamma)+\frac{\pi}{4},\hspace{.2in} p={1\over 16} - {1\over 8\sqrt{2}}\cos \alpha, \ &{\rm and}&\
q={1\over 16} - {1\over 8\sqrt{2}} \sin \alpha.
\end{eqnarray*}
If
\begin{eqnarray*}
a_0 &=& (-p(1 + \cos(\beta - \gamma)) - q \sin
(\beta - \gamma) - \sqrt{p^2 + q^2} (\cos \beta +
\cos \gamma))/4\\
a_2 &=&(-p(1 - \cos(\beta - \gamma)) + q \sin (\beta
- \gamma) - \sqrt{p^2 + q^2}(\cos \beta - \cos
\gamma))/4\\
a_6 &=&(-p(1 - \cos(\beta - \gamma)) + q\sin(\beta
- \gamma) + \sqrt{p^2 + q^2}(\cos \beta - \cos
\gamma))/4\\
a_8 &=&(-p(1 + \cos(\beta - \gamma)) - q(\sin(\beta
- \gamma) + \sqrt{p^2 + q^2} (\cos \beta + \cos
\gamma))/4\\ \\
b_0 &=&(-q(1 + \cos(\beta - \gamma)) + p\sin(\beta
- \gamma) - \sqrt{p^2 + q^2}(\sin \beta + \sin
\gamma))/4\\
b_2 &=&(-q(1 - \cos(\beta - \gamma)) - p\sin(\beta -
\gamma) - \sqrt{p^2 + q^2}(\sin \beta - \sin
\gamma))/4\\
b_6 &=&(-q(1 - \cos (\beta - \gamma)) - p\sin(\beta
- \gamma) + \sqrt{p^2 + q^2}(\sin \beta - \sin
\gamma))/4\\
b_8 &=&(-q(1 + \cos (\beta - \gamma)) + p\sin(\beta
- \gamma) + \sqrt{p^2 + q^2}(\sin \beta + \sin
\gamma))/4\\ \\
  a_1 &=& {p\over 2} + {1\over 2}\sqrt{p^2 + q^2} \cos \beta, \hspace{.2in}
  b_1 \ =\  {q\over 2} + {1\over 2}\sqrt{p^2 + q^2} \sin\beta  \\
  a_3 &=& {p\over 2} + {1\over 2}\sqrt{p^2 + q^2} \cos\gamma, \hspace{.2in}
  b_3\ =\ {q\over 2} + {1\over 2}\sqrt{p^2 + q^2} \sin\gamma  \\
  a_5 &=& {p\over 2} - {1\over 2}\sqrt{p^2 + q^2}\cos\gamma, \hspace{.2in}
  b_3\ =\ {q\over 2} - {1\over 2}\sqrt{p^2 + q^2}\sin\gamma\\
  a_7 &=& {p\over 2} - {1\over 2}\sqrt{p^2 + q^2}\cos\beta, \hspace{.2in}
  b_7\ =\ {q\over 2} - {1\over 2}\sqrt{p^2 + q^2 }\sin\beta \\ \\
  a_4 &=& {1\over 4} - p, \hspace{.2in}   b_4 \ =\ {1\over 4} -q,
\end{eqnarray*}
\noindent then $m(x,y)$ with these coefficients $a_i$ and $b_i$ satisfies
the properties (i), (ii), (iii), and (iv). On the other hand, if
$m(x,y)$ satisfies the properties (i), (ii), (iii), (iv),
and the conditions of Case 1, then the coefficients
$a_i$ and $b_i$ can be expressed in the above format.
\end{theorem}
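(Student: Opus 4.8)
\noindent My plan is to prove the two implications separately: the ``if'' direction by direct substitution, and the ``only if'' direction by reconstructing $\beta,\gamma$ from a given solution. Throughout I write $R=\sqrt{p^2+q^2}$, $\theta=\beta-\gamma$, $C=\cos\theta$, $S=\sin\theta$, and I record the two consequences of the definitions of $p,q$, namely $p^2+q^2=\frac1{64}-\frac1{64\sqrt2}(\cos\alpha+\sin\alpha)$ and $p+q=\frac18-\frac1{8\sqrt2}(\cos\alpha+\sin\alpha)$; these give the identity $p+q=8R^2$ for \emph{every} $\alpha$, which I use repeatedly. The symmetry property (iii) is automatic from the polyphase form, and for (i) and (iv) I would first note that the formulas force the outer polyphase sums to vanish: a short computation gives $a_0+a_1+a_2=a_6+a_7+a_8=0$ and $a_0+a_3+a_6=a_2+a_5+a_8=0$, with the analogous identities for the $b_i$, while the central row and column each sum to $\frac14$. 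Hence the moment equations (\ref{(2.15)})--(\ref{(2.20)}) all read $0=0$ or $\frac14=\frac14$, and (\ref{(2.1)}) is exactly Lemma~\ref{Lemma 2.2}; the real content of the forward direction is orthogonality, property (ii).

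For (ii) I would substitute the formulas into the thirteen quadratic equations (\ref{(2.2)})--(\ref{(2.14)}) and show that, after using $p^2+q^2=R^2$ together with the elementary identities $(1\pm C)^2+S^2=2(1\pm C)$ and $(\cos\beta\pm\cos\gamma)^2+(\sin\beta\pm\sin\gamma)^2=2(1\pm C)$, each collapses to one of only two scalar relations. The majority---for instance (\ref{(2.2)}), (\ref{(2.4)}), (\ref{(2.9)}), (\ref{(2.13)}), and the normalization (\ref{(2.14)})---reduce either to $0=0$ or to the identity $p+q=8R^2$, and therefore hold for \emph{every} $\alpha$. The equations that couple the central coefficients $a_4=\frac14-p$, $b_4=\frac14-q$ to the edge and corner terms, such as (\ref{(2.10)}) and (\ref{(2.11)}), reduce instead to the single identity $(p-q)\,S=8R^2\,C$.

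This last identity is the heart of the matter and is exactly where the prescribed phase $\alpha=2\theta+\frac\pi4$ enters. Substituting it yields $\cos\alpha+\sin\alpha=\sqrt2\cos2\theta$ and $\cos\alpha-\sin\alpha=-\sqrt2\sin2\theta$, so that both $(p-q)\,S$ and $8R^2\,C$ equal $\frac14\sin^2\theta\cos\theta$; this is precisely what dictates both the factor $2$ and the additive $\frac\pi4$ in the definition of $\alpha$. I expect the main obstacle to be the bookkeeping that shows each coupling equation really reduces to this one relation and not to several independent ones; the $p\leftrightarrow q$, $\cos\leftrightarrow\sin$ symmetry of the ansatz should make the $a$- and $b$-parts combine so that all cross terms cancel, leaving the single scalar condition.

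For completeness I would run the argument in reverse. Given any solution satisfying (i)--(iv) and the Case~1 hypotheses, Lemma~\ref{Lemma 3.1} already fixes $a_4,b_4$ and the edge sums $a_1+a_7=a_3+a_5=p$, $b_1+b_7=b_3+b_5=q$, while Lemma~\ref{Lemma 2.3} defines $\alpha$. Combining several of the quadratic constraints (for instance (\ref{(2.2)}), (\ref{(2.3)}), (\ref{(2.8)}) with the normalization (\ref{(2.14)})) I would show that the two edge-difference vectors $(a_1-a_7,\,b_1-b_7)$ and $(a_3-a_5,\,b_3-b_5)$ have common squared length $p^2+q^2$; this is the delicate step of the converse, since it requires combining several quadratics rather than reading off a single one. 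Defining $\beta$ and $\gamma$ as the polar angles of these vectors then reproduces the stated expressions for $a_1,a_3,a_5,a_7$ and $b_1,b_3,b_5,b_7$, the remaining quadratics determine the corners $a_0,a_2,a_6,a_8,b_0,b_2,b_6,b_8$ uniquely in the displayed form, and the coupling equation (\ref{(2.11)}) forces $(p-q)\,S=8R^2\,C$, which given the definitions of $p,q$ pins $\alpha$ to $2\theta+\frac\pi4$ and completes the parameterization.
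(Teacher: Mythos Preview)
Your approach is essentially the paper's, and your organization of the forward direction around the two scalar identities $p+q=8R^2$ and $(p-q)S=8R^2C$ is a clean way to package what the paper leaves as ``substitute back and verify.'' Your derivation that $(p-q)S=8R^2C$ is equivalent to $\alpha=2(\beta-\gamma)+\tfrac{\pi}{4}$ matches the paper's final reduction of equation~(\ref{(2.10)}).

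One omission in your converse sketch is worth flagging. Before you can extract the edge--difference lengths or solve for the corners, you need the vanishing outer row and column sums
\[
a_0+a_1+a_2=a_6+a_7+a_8=a_0+a_3+a_6=a_2+a_5+a_8=0
\]
(and the analogous $b$-identities) as \emph{consequences} of (i)--(iv) and the Case~1 hypotheses, not merely as features of the ansatz. The paper obtains these as (\ref{(3.1)}) and (\ref{(3.2)}) by combining (\ref{(2.9)}), (\ref{(2.13)}), (\ref{(2.14)}) and (\ref{(2.8)}), (\ref{(2.12)}), (\ref{(2.14)}) respectively with the Case~1 assumption that the central row and column each sum to $\tfrac14$. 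Only after these are in hand does (\ref{(2.13)}) collapse to $a_1a_7+b_1b_7=0$ (equivalently, your length identity for $(a_1-a_7,b_1-b_7)$); the combination you suggest, (\ref{(2.2)}), (\ref{(2.3)}), (\ref{(2.8)}), (\ref{(2.14)}), does not by itself yield this without (\ref{(3.1)}). Likewise, (\ref{(3.1)}) and (\ref{(3.2)}) are what let the paper reduce (\ref{(2.4)})--(\ref{(2.9)}) back to (\ref{(2.2)}) and (\ref{(2.3)}) and then solve the resulting circle--line system (\ref{R1})--(\ref{R2}) for the corners. With that step inserted, your outline coincides with the paper's proof.
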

\begin{proof}
The proof consists of deriving necessary conditions from
various combinations of the nonlinear equations which eventually
leads to a complete parameterization that satisfies each
nonlinear equation individually.

Assume the conditions of Case 1, i.e. $a_1+a_4+a_7=1/4$
and $a_3+a_4+a_5=1/4$.  Using equations (\ref{(2.9)}) and (\ref{(2.14)}),
we have
$$
\sum_{\nu=a,b}((\nu_0 + \nu_2)^2 + (\nu_3 + \nu_5)^2 + (\nu_6 + \nu_8)^2
+ \nu_1^2 + \nu_4^2 + \nu_7^2) = {1\over 8}.
$$
Using (\ref{(2.13)}), the above equation becomes
$$
\sum_{\nu=a,b}((\nu_0 + \nu_1 + \nu_2)^2 + (\nu_3 + \nu_4 + \nu_5)^2 +
(\nu_6 + \nu_7 + \nu_8)^2) = {1\over 8}.
$$
Since $a_3 + a_4 + a_5 = b_3 + b_4 + b_5  = {1\over 4}$, we have
\begin{equation}
a_0 + a_1 + a_2 = 0,\ \  a_6 + a_7 + a_8 = 0, \ \ b_0 + b_1 + b_2 = 0, \ \
b_6 + b_7 + b_8 = 0. \label{(3.1)}
\end{equation}
Similarly, by equations (\ref{(2.8)}) and (\ref{(2.14)}), we have
$$
\sum_{\nu=a,b}((\nu_0 + \nu_6)^2 + (\nu_1 + \nu_7)^2 + (\nu_2 + \nu_8)^2
+ \nu^2_3 + \nu^2_4 + \nu^2_5) = {1\over 8}.
$$
Using (\ref{(2.12)}), the above equation becomes
$$
\sum_{\nu=a,b}((\nu_0 + \nu_3 + \nu_6)^2 + (\nu_1 + \nu_4 + \nu_7)^2 +
(\nu_2 + \nu_5 + \nu_8)^2) = {1\over 8}.
$$
Since $a_1 + a_4+ a_7 = b_1 + b_4 + b_7 = {1\over 4}$,  we have
\begin{equation}
a_0 + a_3 + a_6 = 0, \ \ a_2 + a_5 + a_8 = 0,
\ \ b_0 + b_3 + b_6 = 0, \ \ b_2 + b_5 + b_8 = 0. \label{(3.2)}
\end{equation}
We now use (\ref{(3.1)}), (\ref{(3.2)}), and the equations
in Lemma \ref{Lemma 3.1} to simplify the equations
(\ref{(2.2)})-(\ref{(2.14)}).  By (\ref{(3.1)}),  equation (\ref{(2.13)})
becomes
\begin{eqnarray*}
0 &=& \sum_{\nu=a,b} (-\nu_1^2 +\nu_4(\nu_3+\nu_5)-\nu_7^2)\\
&=&\sum_{\nu=a,b} -(\nu_1+\nu_7)^2+\nu_4({1\over 4}-\nu_4)+\nu_1\nu_7 \\
&=&\sum_{\nu=a,b}(-({1\over 4}- \nu_4)^2+\nu_4({1\over 4}-\nu_4)+\nu_1\nu_7)\\
&=& \sum_{\nu=a,b} \nu_1 \nu_7.
\end{eqnarray*}
since $\sum_{\nu=a,b} (-({1\over 4}- \nu_4)^2+\nu_4({1\over 4}-\nu_4))=0$ by
Lemma \ref{Lemma 3.1}.  A similar situation exists for equation
(\ref{(2.12)}), thus equations (\ref{(2.12)}) and (\ref{(2.13)}) simplify to
\begin{equation}
a_3 a_5 + b_3 b_5 = 0, \ \ a_1 a_7 + b_1 b_7 = 0.\label{(3.3)}
\end{equation}

We are now able to solve for $a_1, a_3, a_5, a_7, b_1, b_3, b_5,$ and
$b_7$. For simplicity, we denote $a_1+a_7=p$ and $b_1+b_7=q$. By
(\ref{(3.3)})  we have
$$
a^2_1 - pa_1+b^2_1 - b_1 q = 0 \ \ \ \hbox{or}\ \ \
(a_1-{p\over 2})^2+(b_1-{q\over 2})^2={1\over 4}(p^2  + q^2).
$$
Thus, we have
\begin{eqnarray*}
a_1 &=& {p\over 2} + {1\over 2}\sqrt{p^2+q^2} \cos \beta, \ \ \
b_1 \ =\  {q\over 2} + {1\over 2}\sqrt{p^2+q^2} \sin \beta, \\
a_7 &=& {p\over 2} - {1\over 2}\sqrt{p^2+q^2} \cos \beta, \ \ \
b_7 \ =\  {q\over 2} - {1\over 2}\sqrt{p^2+q^2} \sin \beta.
\end{eqnarray*}
Similarly, we have
\begin{eqnarray*}
a_3 &=& {p\over 2} + {1\over 2}\sqrt{p^2+q^2} \cos \gamma, \ \ \
b_3 \ =\ {q\over 2} + {1\over 2}\sqrt{p^2+q^2} \sin \gamma, \\
a_5 &=& {p\over 2} - {1\over 2}\sqrt{p^2+q^2} \cos \gamma, \ \ \
b_5 \ =\  {q\over 2} - {1\over 2}\sqrt{p^2+q^2} \sin \gamma.
\end{eqnarray*}
Next we simplify the equations (\ref{(2.2)})-(\ref{(2.11)}).  Note
that the addition of equation (\ref{(2.10)}) and (\ref{(2.11)}) is
\begin{equation}
\sum_{\nu=a,b}(\nu_4(\nu_0 + \nu_2 + \nu_6 + \nu_8) +
(\nu_1 + \nu_7)(\nu_3 +\nu_5)) = 0.\label{(3.4)}
\end{equation}
After substitution of the equations in Lemma \ref{Lemma 3.1},
the left hand side of equation (\ref{(3.4)}) becomes
\begin{eqnarray*}
&&2({1\over 4}-p)(-p) + 2(-p)(-p) + 2({1\over 4}-q)(-q) + 2(-q)(-q)\\
&=& 4[p^2 - {1\over 8} p + ({1\over 16})^2 + q^2 -
{1\over 8} q + ({1\over 16})^2] - 8({1\over 16})^2\\
&=& 4[(p - {1\over 16})^2 + (q - {1\over 16})^2] - {1\over 32}\\
&=& 4[{1\over 128}\cos^2\alpha+{1\over 128}\sin^2\alpha]- {1\over 32} = 0.
\end{eqnarray*}
That is, the equations (\ref{(2.10)}) and (\ref{(2.11)}) are linearly
dependent and consequently only one needs to be considered.  We will
deal with equation (\ref{(2.10)}) later.  Furthermore, for
equations (\ref{(2.4)})-(\ref{(2.9)}), we have the following
\begin{splitmath}
\sum_{\nu=a,b} ( \nu_1 \nu_6+\nu_2 \nu_7)
= \sum_{\nu=a,b} \nu_1 \nu_6 + \nu_1 \nu_7 + \nu_2 \nu_6+ \nu_2 \nu_7\\
= \sum_{\nu=a,b} (\nu_1 + \nu_2)(\nu_6 + \nu_7)
= \sum_{\nu=a,b}(-\nu_0)(-\nu_8) = \sum_{\nu=a,b} \nu_0 \nu_8,
\end{splitmath}
\begin{splitmath}
\sum_{\nu=a,b} ( \nu_0 \nu_7+\nu_1 \nu_8 )
= \sum_{\nu=a,b} \nu_0 \nu_7+ \nu_0 \nu_8+\nu_1 \nu_7+\nu_1\nu_8\\
= \sum_{\nu=a,b}(\nu_0 + \nu_1)(\nu_7 + \nu_8)
= \sum_{\nu=a,b}(-\nu_2)(-\nu_6) = \sum_{\nu=a,b} \nu_2 \nu_6,
\end{splitmath}
\begin{splitmath}
\sum_{\nu=a,b} (\nu_2 \nu_3+\nu_5 \nu_6)
= \sum_{\nu=a,b}  \nu_2\nu_3 + \nu_2\nu_6 + \nu_3\nu_5+ \nu_5\nu_6\\
=\sum_{\nu=a,b} (\nu_2 + \nu_5)(\nu_3 + \nu_6)
= \sum_{\nu=a,b}(-\nu_8)(-\nu_0) = \sum_{\nu=a,b} \nu_0 \nu_8,
\end{splitmath}
\begin{splitmath}
\sum_{\nu=a,b}(\nu_0 \nu_5 + \nu_3 \nu_8)
= \sum_{\nu=a,b} \nu_0 \nu_5 + \nu_0 \nu_8+ \nu_3\nu_5+ \nu_3 \nu_8\\
= \sum_{\nu=a,b}(\nu_0 + \nu_3)(\nu_5 + \nu_8)
= \sum_{\nu=a,b}(-\nu_6)(-\nu_2) = \sum_{\nu=a,b} \nu_2\nu_6,
\end{splitmath}
\begin{splitmath}
\sum_{\nu=a,b}(\nu_0 \nu_6 +\nu_1 \nu_7 + \nu_2 \nu_8)
= \sum_{\nu=a,b} \nu_0 \nu_6 + \nu_2 \nu_8
= \sum_{\nu=a,b} \nu_0 \nu_6 + \nu_0 \nu_8+ \nu_2 \nu_6 + \nu_2 \nu_8\\
= \sum_{\nu=a,b}(\nu_0 + \nu_2)(\nu_6 + \nu_8)
= \sum_{\nu=a,b} (-\nu_1)(-\nu_7) = \sum_{\nu=a,b} \nu_1\nu_7,
\end{splitmath}
\begin{splitmath}
\sum_{\nu=a,b} (\nu_0 \nu_2 + \nu_3 \nu_5 + \nu_6 \nu_8)
= \sum_{\nu=a,b} \nu_0 \nu_2  + \nu_0 \nu_8 + \nu_2 \nu_6+ \nu_6 \nu_8\\
= \sum_{\nu=a,b} (\nu_0 + \nu_6)(\nu_2 + \nu_8)
= \sum_{\nu=a,b} (-\nu_3)(-\nu_5) =\sum_{\nu=a,b} \nu_3 \nu_5.
\end{splitmath}
Therefore, only $\sum_{\nu=a,b} \nu_0 \nu_8 = 0$ and $\sum_{\nu=a,b} \nu_2
\nu_6 = 0$ remain to be solved in addition to (\ref{(2.10)}) since
equation (\ref{(2.14)}) follows from (\ref{(2.1)})-(\ref{(2.13)}).

In order to solve equations (\ref{(2.2)}) and (\ref{(2.3)}), note that
(\ref{(3.1)}) and (\ref{(3.2)}) imply $a_0 = a_5 + a_8 - a_1$ and
$b_0 = b_5 + b_8 -b_1$.  Thus, (\ref{(2.2)}) becomes
\begin{equation}
a^2_8 + a_8(a_5 - a_1) + b^2_8 + b_8(b_5 - b_1) =0\label{(3.5)}
\end{equation}
while (\ref{(2.3)}) becomes
$$
(a_5 + a_8)(a_7 + a_8) + (b_5 + b_8)(b_7 + b_8) = 0
$$
or
\begin{equation}
a^2_8 + a_8(a_5 + a_7) + b^2_8 + b_8(b_5 + b_7) =
-a_5 a_7 - b_5 b_7. \label{(3.6)}
\end{equation}
The subtraction of (\ref{(3.5)}) from (\ref{(3.6)}) yields
$$
a_8(a_1 + a_7) + b_8(b_1 + b_7) = -a_5 a_7 - b_5 b_7.
$$
So, we have
\begin{equation}
p a_8 + q b_8 = -a_5 a_7 - b_5 b_7.\label{(3.7)}
\end{equation}
The addition of (\ref{(3.5)}) and (\ref{(3.6)}) yields
$$
a^2_8 + (a_5 + {a_7 - a_1\over 2}) a_8 + b^2_8 +
(b_5 + {b_7 - b_1\over 2}) b_8 = -{1\over 2} (a_5
a_7 + b_5b_7)
$$
which is
\begin{equation}
(a_8 + {1\over 2}(a_5 + {a_7 - a_1 \over 2}))^2 +
(b_8 + {1\over 2}(b_5 + {b_7 - b_1\over 2}))^2 = R_1\label{R1}
\end{equation}
for some known value $R_1$.  In addition,  equation (\ref{(3.7)}) can be
rewritten as
\begin{equation}
p(a_8 + {1\over 2}(a_5 + {a_7 - a_1\over 2})) +
q(b_8 + {1\over 2}(b_5 + {b_7 - b_1\over 2})) = R_2\label{R2}
\end{equation}
for some known value $R_2$. Equations (\ref{R1}) and (\ref{R2})
can be solved simultaneously.  Thus, we
obtain the expression for the $a_i$'s and $b_i$'s given in Theorem \ref{Theorem 3.2}.

Finally, to satisfy (\ref{(2.10)}), we put these $a_i$ and $b_i$
into (\ref{(2.10)}) and simplify the equation yielding
$$
\cos(\beta-\gamma)={1\over \sqrt{2}}\cos(\alpha-\beta+\gamma)
+{1\over \sqrt{2}} \sin(\alpha-\beta+\gamma).
$$
Solving, we find that $\alpha=2(\beta-\gamma)+\pi/4$.

The above discussion shows that if $m(x,y)$ satisfies the
properties (i), (ii), (iii), (iv), and the conditions
of Case 1, then its coefficients $a_i$ and  $b_i$ can be expressed as
the two-parameter family given in the statement  of Theorem \ref{Theorem 3.2}.

On the other hand, to verify that $m(x,y)$ with the coefficients
$a_i$ and $b_i$ satisfies the properties (i), (ii), (iii), (iv),
and the conditions of Case 1, we just substitute the solutions back
into equations (\ref{(2.1)})-(\ref{(2.20)}). This completes the proof.
\end{proof}

%\noindent Figure \ref{Figure 2.1} shows one solution of this two parameter family which contains the tensor-product Haar scaling function whenever
%$\beta=\gamma$.

%\begin{center}
%\begin{figure}
%\psone{plot1a.ps}{5}
%\caption{A typical scaling function in the two-parameter family of Case 1.}
%\label{Figure 2.1}
%\end{figure}
%\end{center}

%%%%%%%%%%%%%
\subsection{Solution of Case 2}
%%%%%%%%%%%%%

In this section, we consider Case $2$ where
$a_1+a_4+a_7=1/4$\ \  and\ \  $a_3+a_4+a_5= 0$.  By
Lemmas \ref{Lemma 2.2} and \ref{Lemma 2.3},
we have $a_1 + a_3 + a_5 + a_7 = {1 \over
8} - {1 \over4\sqrt{2}} \cos \alpha$. So, we have
$$
a_4 = {1 \over 16} + { 1 \over 8 \sqrt{2}} \cos \alpha\ \ \hbox{and} \ \
b_4 = {1 \over 16} + {1 \over 8 \sqrt{2}}\sin \alpha.
$$
By (\ref{(2.9)}),(\ref{(2.13)}), and (\ref{(2.14)}), we have
$$
\sum_{\nu=a,b}\left( (\nu_0 + \nu_1 + \nu_2)^2 +
(\nu_3 + \nu_4 + \nu_5)^2 + (\nu_6 + \nu_7 +\nu_8)^2 \right)= {1 \over 8}.
$$
The assumptions of Case $2$ imply
$$
\sum_{\nu=a,b} \left((\nu_0 + \nu_1 + \nu_2)^2 +
(\nu_6 + \nu_7 + \nu_8)^2\right) = {1 \over 8},
$$
i.e.
$$
(a_0 + a_1 + a_2)^2 + (a_6 + a_7 + a_8)^2 + (b_0 + b_1 +
b_2)^2 + (b_6 + b_7 + b_8)^2 = {1 \over 8}.
$$
By (\ref{(2.15)}) and (\ref{(2.17)}), we have
$$
(a_0 + a_1 + a_2) ^2 + (a_6 + a_7 + a_8)^2= {1 \over 16}.
$$
By Lemma \ref{Lemma 2.2}, the above equation becomes
$$
( {1 \over 4} - (a_6 + a_7 + a_8))^2 + (a_6 + a_7 + a_8)^2 =
{1 \over 16}.
$$
It follows that $a_6 + a_7 + a_8 = 0$ or $a_6 + a_7 +
a_8 = {1 \over 4}$.
Thus, Case $2$  branches out into two subcases.

\begin{itemize}
\item Subcase 2a:
$\displaystyle
a_1 + a_4 + a_7 = {1 \over 4}, \ \ a_3 + a_4 + a_5 = 0, \ \
a_6 + a_7 + a_8 = 0, \ \ a_0 + a_1 + a_2 = {1\over 4}.
$
\item Subcase 2b: $\displaystyle
a_1 + a_4 + a_7 = {1 \over 4},  \ \ a_3 + a_4 +a_5 = 0, \ \
a_6 + a_7 + a_8 = {1 \over 4},  \ \ a_0 + a_1 +a_2 = 0.
$
\end{itemize}
We only consider subcase $2a$ here.  The subcase $2b$ can be
treated similarly and is left to the interested reader.
Theorem \ref{Theorem 4.1} gives the complete solution
of Subcase 2a.

\begin{theorem} \label{Theorem 4.1}

1)  For  any $\gamma \in [0,2\pi]$,\ \ \ $a_3=b_3=a_4=b_4=a_5=b_5=0$,
\begin{eqnarray*}
a_1 &=& {3\over 16} - {1 \over 8\sqrt{2}}\cos \gamma,\ \ \ \
a_7 = {1\over 16} + {1 \over 8\sqrt{2}}\cos \gamma,\\
a_0 &=& {1\over 32}\left(1 +\sqrt{2} \cos \gamma \pm
\sqrt{2 + \sqrt{2} (\cos \gamma + \sin \gamma)}\right),\\
a_2 &=& {1\over 32}\left(1 + \sqrt{2} \cos \gamma \mp
\sqrt{2 + \sqrt{2}(\cos \gamma + \sin \gamma)}\right),  \\
a_6 &=& {1\over 32}\left(-1 - \sqrt{2} \cos \gamma \mp
\sqrt{2 + \sqrt{2}(\cos \gamma + \sin \gamma)}\right),\\
a_8 &=& {1\over 32}\left(-1 -\sqrt{2} \cos \gamma \pm
\sqrt{2 + \sqrt{2} (\cos \gamma + \sin \gamma)}\right), \\
b_1 &=& {3\over 16} - {1 \over 8\sqrt{2}}\sin \gamma, \ \ \ \
b_7 = {1\over 16} +{1 \over 8\sqrt{2}}\sin \gamma,\\
b_0 &=& {1\over 32}\left(1 +\sqrt{2} \sin \gamma \mp
\sqrt{2 + \sqrt{2} (\cos \gamma + \sin \gamma)}\right),\\
b_2 &=& {1\over 32}\left(1 + \sqrt{2} \sin \gamma \pm
\sqrt{2 + \sqrt{2}(\cos \gamma + \sin \gamma)}\right),\\
b_6 &=& {1\over 32}\left(-1 - \sqrt{2} \sin \gamma \pm
\sqrt{2 + \sqrt{2}(\cos \gamma + \sin \gamma)}\right),\\
b_8 &=& {1\over 32}\left(-1 -\sqrt{2} \sin \gamma \mp
\sqrt{2 + \sqrt{2} (\cos \gamma + \sin \gamma)}\right).
\end{eqnarray*}
2) For any $\alpha \in [0,{\pi \over 4})\cup({\pi \over 4},2\pi]$, let
\begin{eqnarray*}
\gamma&=&\left\{\begin{array}{ccccc}
-{1\over 2}\alpha+{7\pi \over 8}& {\rm or} & {1\over 2}\alpha-{3\pi \over 8}&
{\rm if} & 0\leq \alpha <{ \pi \over 4}\\
 {1\over 2}\alpha+{5\pi \over 8}& {\rm or} & -{1\over 2}\alpha-{ \pi \over 8}&
 {\rm if} & { \pi \over 4}< \alpha \leq 2\pi
\end{array}\right.\\
p&=&-{1\over 16}-{1\over 8\sqrt{2}}\cos \alpha, \ \ \ \
q=-{1\over 16}-{1\over 8\sqrt{2}}\sin \alpha, \\
s&=&\sqrt{32(p^2+q^2)}, \ \ \ \
t=\sqrt{(p+{1\over 8})^2+(q+{1\over 8})^2},
\end{eqnarray*}
\begin{eqnarray*}
&&a_0=-{1\over 32}\left(-1+8p+s+{t\over p-q}\left[
(-s-8p+8q)\cos \gamma+s\sin\gamma\right]\right)\\
&&a_2={1\over 32}\left(1-8p+s+{t\over p-q}\left[
(-s+8p-8q)\cos \gamma+s\sin\gamma\right]\right)\\
&&a_6=-{1\over 32}\left(1+8p+s+{t\over p-q}\left[
(s+8p-8q)\cos \gamma-s\sin\gamma\right]\right)\\
&&a_8={1\over 32}\left(-1-8p+s+{t\over p-q}\left[
(s-8p+8q)\cos \gamma-s\sin\gamma\right]\right)\\
&&b_0={1\over 32}\left(1-8q+s+{t\over p-q}\left[
(s+8p-8q)\sin \gamma-s\cos\gamma\right]\right)\\
&&b_2=-{1\over 32}\left(-1+8q+s+{t\over p-q}\left[
(s-8p+8q)\sin \gamma-s\cos\gamma\right]\right)\\
&&b_6={1\over 32}\left(-1-8q+s+{t\over p-q}\left[
(-s-8p+8q)\sin \gamma+s\cos\gamma\right]\right)\\
&&b_8=-{1\over 32}\left(1+8q+s+{t\over p-q}\left[
(-s+8p-8q)\sin \gamma+s\cos\gamma\right]\right)
\end{eqnarray*}
\begin{eqnarray*}
&&a_1={1\over 16}(3+8p-8t\cos\gamma), \ \ \ \
b_1\ =\ {1\over 16}(3+8q-8t\sin\gamma)\\
&&a_7={1\over 16}(1+8p+8t\cos\gamma), \ \ \ \
b_7\ =\ {1\over 16}(1+8q+8t\sin\gamma)\\
&&a_3={1\over 16}(8p+s), \hspace{.75in}
b_3\ =\ {1\over 16}(8q-s),\\
&&a_5\ =\ {1\over 16}(8p-s), \hspace{.7in}
b_5\ =\ {1\over 16}(8q+s)\\
&&a_4=-p, \hspace{1.25in} b_4\ =\ -q.
\end{eqnarray*}
3) For  $\alpha={\pi \over 4}$,
$$
(c_{jk})_{j,k}={1\over 8}\left[\begin{array}{cccccc}
0 & \ \ 1 & 1 & 1 & \ \ 1 & 0\\
0 &\ \ 0 &0 &0 &\ \ 0& 0\\
0& -1 &1 &1& -1& 0\\
0& -1& 1& 1& -1& 0\\
0 &\ \ 0& 0& 0& \ \ 0 &0\\
0 &\ \ 1 & 1 & 1 & \ \ 1 & 0\\
\end{array}\right]
\ \ \ \hbox{or}\ \ \
{1\over 16}\left[\begin{array}{cccccc}
\ \ 1 & \ \ 1 & 2 & 2 & \ \ 1 & \ \ 1\\
-1 &\ \ 1 &0 &0 &\ \ 1& -1\\
\ \ 0& -2 &2 &2& -2& \ \ 0\\
\ \ 0& -2& 2& 2& -2& \ \ 0\\
-1 & \ \ 1& 0& 0& \ \ 1 &-1\\
\ \ 1 & \ \ 1 & 2 & 2 & \ \ 1 & \ \ 1\\
\end{array}\right].
$$
If $m(x,y)$ has the coefficients given by 1), 2), or 3), then
$m(x,y)$ satisfies the properties of (i), (ii), (iii), and (iv).  Conversely,
if $m(x,y)$ satisfies the properties of (i), (ii), (iii), (iv), and
the conditions of Subcase 2a, then the coefficients of $m(x,y)$ can be
expressed
as 1), 2), or 3).
\end{theorem}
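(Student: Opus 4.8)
The plan is to mirror the proof of Theorem~\ref{Theorem 3.2}: first extract the linear data forced by the Subcase~2a hypotheses, then use these relations to collapse the thirteen orthogonality equations (\ref{(2.2)})--(\ref{(2.14)}) into a handful of genuinely independent quadratics, solve that reduced system by completing squares (producing circles and lines whose intersections account for the $\pm$/$\mp$ branches), and finally verify sufficiency by substituting the resulting families back into (\ref{(2.1)})--(\ref{(2.20)}). The essential difference from Case~1 is that Subcase~2a is \emph{asymmetric}: the row sums are $a_0+a_1+a_2=1/4$, $a_3+a_4+a_5=0$, $a_6+a_7+a_8=0$ (with the same for the $b_i$ via (\ref{(2.15)})--(\ref{(2.17)})) together with the column sum $a_1+a_4+a_7=1/4$, so the symmetric cancellations that made Case~1 collapse so cleanly will now leave residual linear terms and the bookkeeping will be heavier.

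First I would record the consequences already isolated before the theorem, namely $a_4=\frac{1}{16}+\frac{1}{8\sqrt{2}}\cos\alpha$ and $b_4=\frac{1}{16}+\frac{1}{8\sqrt{2}}\sin\alpha$, which in the theorem's notation read $a_4=-p$, $b_4=-q$, whence $a_3+a_5=p$ and $b_3+b_5=q$ from the vanishing middle-row sum. Using these together with the vanishing bottom-row sums $a_6+a_7+a_8=b_6+b_7+b_8=0$, I would re-derive the factorization identities of Theorem~\ref{Theorem 3.2} in this asymmetric setting; each identity such as $\sum_{\nu=a,b}(\nu_2\nu_3+\nu_5\nu_6)$ should again reduce essentially to the corner products $\sum_{\nu}\nu_0\nu_8$ and $\sum_{\nu}\nu_2\nu_6$, but now carrying extra linear terms generated by the nonzero row sum $a_0+a_1+a_2=1/4$ and the unbalanced column sums $a_0+a_3+a_6$ and $a_2+a_5+a_8$. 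As in the previous proof I expect (\ref{(2.14)}) to become a consequence of the others and one of the pair (\ref{(2.10)}), (\ref{(2.11)}) to be redundant.

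Next I would solve the reduced system band by band. The central band $a_3,a_5,b_3,b_5$ is pinned down, up to a discrete sign, by its linear sum conditions together with a quadratic orthogonality relation, giving $a_3=(8p+s)/16$, $a_5=(8p-s)/16$ with $s=\sqrt{32(p^2+q^2)}$. The column band $a_1,a_7,b_1,b_7$ satisfies a quadratic constraint whose completed square is a circle of radius $t=\sqrt{(p+1/8)^2+(q+1/8)^2}$ about an asymmetric centre, and is therefore parameterized by a single angle $\gamma$, yielding $a_1=(3+8p-8t\cos\gamma)/16$ and $a_7=(1+8p+8t\cos\gamma)/16$. The four corners $a_0,a_2,a_6,a_8$ are then cut out by a circle-plus-line system analogous to (\ref{R1})--(\ref{R2}), whose two intersection points produce the $\pm$/$\mp$ signs; it is in solving this system that the factor $t/(p-q)$ enters. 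Imposing the last surviving nonlinear equation (the analogue of (\ref{(2.10)})) then forces a trigonometric identity between $\alpha$ and $\gamma$, which when solved gives $\gamma$ as the explicit function of $\alpha$ recorded in part~2, with its two branches and the split at $0\le\alpha<\pi/4$ versus $\pi/4<\alpha\le 2\pi$.

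The main obstacle will be the branch analysis, and this is exactly where parts~1 and 3 appear. The corner step divides by $p-q$, which vanishes precisely when $\cos\alpha=\sin\alpha$, i.e. at $\alpha=\pi/4$ and $\alpha=5\pi/4$; at both points the part-2 formulas become indeterminate and the corner system must be re-solved there. At $\alpha=\pi/4$ one has $p=q=-1/8$ and $t=0$, so the $\gamma$-dependence collapses and the system admits only the two explicit filters of part~3. At $\alpha=5\pi/4$ one has $p=q=0$, whence $s=0$ and the entire central band vanishes, $a_3=a_4=a_5=b_3=b_4=b_5=0$; here the identity determining $\gamma$ degenerates to a tautology, $\gamma$ is freed, and the resolved corner equations give the one-parameter family of part~1. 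The genuinely hard point is therefore not any single computation but verifying that these three families are \emph{exhaustive}, i.e. that every degenerate configuration of the circle-and-line solution falls into part~1 or part~3 and that no further branch of the quadratic system has been overlooked. Once necessity is established in this way, sufficiency is immediate by direct substitution of 1), 2), and 3) into (\ref{(2.1)})--(\ref{(2.20)}), as at the close of the proof of Theorem~\ref{Theorem 3.2}.
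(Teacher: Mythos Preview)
Your outline tracks the paper's approach, but there is a genuine gap at the step where you handle the central band. You assert that $(a_3,a_5,b_3,b_5)$ is ``pinned down, up to a discrete sign, by its linear sum conditions together with a quadratic orthogonality relation,'' and then write $a_3=(8p+s)/16$, $a_5=(8p-s)/16$. But two linear sums ($a_3+a_5=p$, $b_3+b_5=q$) plus the single quadratic $\sum_\nu \nu_3\nu_5=0$ (which is what (\ref{(2.9)}) reduces to here) only cut out the circle $(a_5+a_4/2)^2+(b_5+b_4/2)^2=(a_4^2+b_4^2)/4$, a one-parameter family. The paper parametrizes it by an extra angle $\beta$; your formulas are exactly the paper's at the particular value $\beta=-\pi/4$, but you have no argument yet for why $\beta$ should be fixed. (Incidentally, the column sums $a_0+a_3+a_6$ and $a_2+a_5+a_8$ are not ``unbalanced'': combining (\ref{(2.8)}), (\ref{(2.12)}), (\ref{(2.14)}) with $a_1+a_4+a_7=1/4$ forces them to vanish, exactly as in Case~1.)

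The discretization of $\beta$, and with it the trichotomy into parts 1), 2), 3), comes from a step you have not identified. After reducing the system, the paper rewrites both $a_0a_8+b_0b_8=0$ and $a_2a_6+b_2b_6=0$ as quadratics in $(a_8,b_8)$, subtracts to get $pa_8+qb_8=-a_5a_7-b_5b_7$, and compares this with the expression for $pa_8+qb_8$ obtained from (\ref{(2.11)}). Eliminating $a_8,b_8$ collapses everything to $\sum_\nu(\tfrac14\nu_5+\nu_4^2)=0$, i.e.\ $\sqrt{p^2+q^2}\,\sin(\pi/4+\beta)=0$. \emph{This} is the real fork: either $p^2+q^2=0$ (so $\alpha=-3\pi/4$, the central band vanishes, and $\gamma$ remains free---part~1), or $\beta\in\{-\pi/4,3\pi/4\}$. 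Only inside the branch $\beta=-\pi/4$ does one then solve the remaining linear $2\times2$ system for $(a_8,b_8)$, and it is there---and only there---that the division by $p-q$ appears, splitting into part~2) ($p\ne q$) and part~3) ($\alpha=\pi/4$). Your proposal collapses the $\beta$-step and attributes both special families to the vanishing of $p-q$; without the $\sin(\pi/4+\beta)=0$ derivation your exhaustiveness claim cannot stand, because an entire one-parameter $\beta$-family would be unaccounted for.
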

\begin{proof}
Assume the conditions of Subcase 2a, i.e.
$$\displaystyle
a_1 + a_4 + a_7 = {1 \over 4}, \ \ a_3 + a_4 + a_5 = 0, \ \
a_6 + a_7 + a_8 = 0, \ \ a_0 + a_1 + a_2 = {1\over 4}.
$$
We immediately have from Lemma \ref{Lemma 2.3}
$$a_4=\frac{1}{16}+\frac{1}{8\sqrt{2}}\cos \alpha, \ \ \ \
b_4=\frac{1}{16}+\frac{1}{8\sqrt{2}}\sin \alpha.$$
By equations (\ref{(2.8)}), (\ref{(2.12)}) and (\ref{(2.14)}), we have
$$
\sum_{\nu=a,b} (\nu_0 + \nu_3 + \nu_6)^2 + (\nu_1 + \nu_4 + \nu_7)^2 + (\nu_2 + \nu_5 +
\nu_8)^2 = {1 \over 8}.
$$
It follows that
\begin{eqnarray}
a_0 + a_3 + a_6 &=& 0, \ \ \ \  b_0 + b_3 + b_6 = 0,\nonumber\\
a_2 + a_5 + a_8 &=& 0, \ \ \ \  b_2 + b_5 + b_8 = 0.\label{(4.1)}
\end{eqnarray}
We now simplify the 13 nonlinear equations (\ref{(2.2)}) -(\ref{(2.14)}).
With (\ref{(2.2)}), (\ref{(2.3)}), and (\ref{(4.1)}),
we may simplify (\ref{(2.9)}) as follows:
\begin{eqnarray}
0 &=& \sum_{\nu=a,b}(\nu_0 \nu_2 + \nu_3 \nu_5 + \nu_6 \nu_8)
\ =\  \sum_{\nu=a,b} (\nu_0 + \nu_6) (\nu_2 + \nu_8) + \nu_3 \nu_5 \nonumber\\
&=& \sum_{\nu=a,b} (-\nu_3) (-\nu_5) + \nu_3 \nu_5 \ =\  2 \sum_{\nu=a,b} \nu_3 \nu_5.
\label{(4.2)}
\end{eqnarray}

Recall $a_3 + a_4 + a_5 = 0$ and $ b_3 + b_4 + b_5 = 0$.  We can
now solve for $a_3, a_5, b_3, b_5$.  Indeed, (\ref{(4.2)}) can be rewritten
as $(a_4 +a_5) a_5 + (b_4 + b_5) b_5 = 0$.  After simplifying, we have
$$
(a_5 + {a_4 \over 2})^2 + (b_5 + {b_4 \over 2})^2 =
{1\over 4} (a_4^2 + b_4^2).
$$
Thus, we have
\begin{eqnarray*}
a_3 &=&  {p \over 2}  + {1 \over 2} \sqrt{p^2 + q^2}
\cos \beta, \ \
b_3 \ =\   {q \over 2}  + {1 \over 2} \sqrt{p^2 + q^2}
\sin \beta\\
a_5&=&  {p \over 2}  - {1 \over 2} \sqrt{p^2 + q^2}
\cos \beta, \ \
b_5 \ =\   {q \over 2}  - {1 \over 2} \sqrt{p^2 + q^2}
\sin \beta,
\end{eqnarray*}
where $p = -a_4$ and $q = -b_4$ (different $p$ and $q$ from section
\ref{section 2.1}).
We note that
\begin{equation}
p^2 + q^2 + {1 \over 8} ( p + q) = 0. \label{(4.3)}
\end{equation}
Similarly, we may simplify (\ref{(2.8)}) to be
\begin{eqnarray}
0&=& \sum_{\nu=a,b} (\nu_1 \nu_7 + \nu_0 \nu_6 + \nu_2 \nu_8)
\ =\ \sum_{\nu=a,b} \nu_1 \nu_7 + (\nu_0 + \nu_2) (\nu_6 + \nu_8)\nonumber\\
&=& \sum_{\nu=a,b} \nu_1 \nu_7 + ( {1 \over 4} - \nu_1) (-\nu_7)
\ =\ 2 \sum_{\nu=a,b} (\nu_1 \nu_7 - {1 \over 8} \nu_7).\label{(4.4)}
\end{eqnarray}
Recall $a_1 + a_7 = {1 \over 4} -a_4$ and $b_1 + b_7= {1 \over 4} - b_4.$
We may solve for $a_1$ and $b_1$ to get $a_1 = {1 \over 4} - a_4 - a_7$
and $b_1 = { 1 \over 4} - b_4 - b_7$.  Putting them into
(\ref{(4.4)}),  we have
$$
\left({1 \over 4} - a_4 - a_7 \right) a_7 - {a_7 \over 8} +
\left({1 \over 4} - b_4 - b_7 \right) b_7 - { b_7 \over 8} = 0
$$
or
$$
\left(a_7 -  {{1 \over 8} - a_4 \over 2}\right)^2 + \left(b_7 -
{{1 \over 8} - b_4 \over 2}\right)^2 = {\left(a_4 - {1 \over
8} \right)^2 + \left( b_4-{1
\over 8}\right)^2 \over 4}.
$$
It follows that
\begin{eqnarray*}
a_7  &=& {p \over 2} + {1 \over 16} + {1 \over 2}
\sqrt{\left( p+{1 \over 8} \right)^2 + \left(q+ {1 \over
8} \right)^2} \ \ \cos \gamma, \\
b_7 &=& {q \over 2} + {1 \over 16} + { 1 \over 2} \sqrt{
\left( p+ { 1 \over 8} \right)^2 + \left( q+ { 1 \over 8}
\right)^2} \ \ \sin \gamma,\\
a_1 &=& {p\over 2} + { 3 \over 16} - {1 \over 2} \sqrt{
\left( p+ {1 \over 8} \right)^2 + \left( q+ { 1 \over 8}
\right)^2} \ \ \cos \gamma,\\
b_1 &=& {q \over 2} + { 3 \over 16} - { 1 \over 2}
\sqrt{\left( p+ { 1 \over 8} \right)^2 + \left( q+ { 1
\over 8} \right)^2} \ \ \sin \gamma.\\
\end{eqnarray*}
We add the left-hand side of (\ref{(2.10)}) and (\ref{(2.11)})
together to get
\begin{eqnarray*}
\sum_{\nu=a,b}&&\hspace{-.2in}\bigg(\nu_0 + \nu_2\bigg) \nu_4 + \nu_1 \bigg(\nu_3 + \nu_5
\bigg) + \nu_4 \bigg(\nu_6 + \nu_8 \bigg) + \nu_7 \bigg(\nu_3 +
\nu_5\bigg)\\
&=&\sum_{\nu=a,b} \nu_4 \bigg( {1 \over 4} - \nu_1 \bigg) + \nu_1
\bigg(-\nu_4 \bigg) + \nu_4 \bigg( -\nu_7 \bigg) + \nu_7 \bigg(
-\nu_4 \bigg)\\
&=&\sum_{\nu=a,b} \nu_4 \left[ { 1 \over 4} - 2 \bigg( \nu_1 + \nu_7 \bigg)
\right]
\ =\  \sum_{\nu=a,b} \nu_4 \bigg(2\nu_4 - { 1 \over 4} \bigg)\\
&=&2 \bigg( 2a_4^2 - { 1 \over 4} a_4 \bigg) + 2 \bigg( 2 b_4^2
- { 1 \over 8} b_4 \bigg)\\
&=& 4 \left[ p^2 + q^2 + { 1 \over 8} \bigg( p + q \bigg)
\right] \ =\  0.
\end{eqnarray*}
That is, only one of (\ref{(2.10)}) and (\ref{(2.11)}) needs to be solved.
Thus, we deal with (\ref{(2.10)}) later.

Turning our attention to equation (\ref{(2.13)}).  We have
by (\ref{(4.2)}) and (\ref{(4.3)})
\begin{eqnarray*}
 \sum_{\nu=a,b}&& \hspace{-.2in}\nu_0 \nu_1 + \nu_1 \nu_2 + \nu_3 \nu_4 +
\nu_4 \nu_5 + \nu_6 \nu_7
+ \nu_7 \nu_8\\
&=& \sum_{\nu=a,b} \bigg(\nu_0 + \nu_2  \bigg) \nu_1 + \bigg(\nu_3 + \nu_5 \bigg)
\nu_4 + \nu_7 \bigg(\nu_6 + \nu_8 \bigg)\\
&=& \sum_{\nu=a,b} \bigg( { 1 \over 4} - \nu_1 \bigg)
\nu_1 - \nu_4^2 - \nu_7^2\\
&=&  \sum_{\nu=a,b} -2 \nu_1 \nu_7 -
\nu_1^2 - \nu_7^2 - \nu_4^2+{1\over 4}(\nu_1+\nu_7)\\
&=& \sum_{\nu=a,b} \bigg( - \bigg( \nu_1 + \nu_7 \bigg)^2 - \nu_4^2+
{1\over 4}(\nu_1+\nu_7) \bigg)\\
&=&  - \sum_{\nu=a,b} \bigg( { 1 \over 4} - \nu_4 \bigg)^2 + \nu_4^2-
{1\over 4}\bigg({1\over 4}-\nu_4\bigg)\ =\ 0.
\end{eqnarray*}
That is, (\ref{(2.13)}) holds for these $a_4, a_1, a_7, b_1,
b_7$.  Similarly, (\ref{(2.12)}) is satisfied in that
\begin{eqnarray*}
\sum_{\nu=a,b} &&\hspace{-.2in}\bigg( \nu_0 \nu_3 + \nu_1 \nu_4 +
\nu_2 \nu_5 + \nu_3 \nu_6 +\nu_4 \nu_7 + \nu_5 \nu_8 \bigg) \\
&=& \sum_{\nu=a,b} -\nu_3 \bigg(\nu_0 + \nu_6 \bigg) +
\nu_4 \bigg( \nu_1 + \nu_7\bigg) + \nu_5 \bigg( \nu_2 + \nu_8 \bigg)\\
&=& \sum_{\nu=a,b} \bigg(-\nu_3^2 + \nu_4
\bigg( { 1 \over 4} - \nu_4 \bigg) -\nu_5^2 \bigg) \\
&=& \sum_{\nu=a,b} - \bigg( \nu_3 + \nu_5 \bigg)^2 - a^2_4 + {\nu_4\over 4}\\
&=& -\sum_{\nu=a,b} \bigg( 2 \nu_4^2 - {\nu_4\over 4 }\bigg) = 0.
\end{eqnarray*}
We now show that equations (\ref{(2.3)}), (\ref{(2.9)}), (\ref{(4.1)})
and (\ref{(4.2)}) imply (\ref{(2.6)}).  The addition of
(\ref{(2.6)}) and (\ref{(2.9)}) yields
$$
\sum_{\nu=a,b} \nu_6 \bigg(\nu_5 + \nu_8 \bigg) + \nu_2 \bigg( \nu_0 + \nu_3
\bigg) + \nu_3 \nu_5 = \sum_{\nu=a,b} -2\nu_2 \nu_6  + \nu_3\nu_5 =0.
$$
Similarly, the equations (\ref{(2.2)}), (\ref{(2.9)}) and (\ref{(4.2)})
imply (\ref{(2.7)}).  Since (\ref{(2.8)}) holds for these $a_1, b_7, a_7, b_7$
under the assumptions of (\ref{(2.2)}) and (\ref{(2.3)}), we further simplify
(\ref{(2.4)}) by adding (\ref{(2.4)}) and (\ref{(2.8)}) together.  That is,
\begin{eqnarray*}
0 &=& \sum_{\nu=a,b} \nu_2 \nu_7 + \nu_1 \nu_6 +
\nu_1 \nu_7 + \nu_0 \nu_6 + \nu_2 \nu_8\nonumber\\
&=& \sum_{\nu=a,b} \nu_6 \bigg( \nu_0 + \nu_1 \bigg) +
\nu_2 \bigg( \nu_7 + \nu_8\bigg) + \nu_1 \nu_7 \nonumber\\
&=& \sum_{\nu=a,b} \nu_6 \bigg( { 1 \over 4} - \nu_2 \bigg) +
\nu_2 \bigg (-\nu_6\bigg) + \nu_1 \nu_7\nonumber\\
&=& \sum_{\nu=a,b} \bigg( { 1 \over 4 } \nu_6 + \nu_1 \nu_7 \bigg)
= \sum_{\nu=a,b}\left( {1\over 4} \nu_6+
{1\over 8} \nu_7\right).\label{(4.5)}
\end{eqnarray*}

Similarly, the sum of equations (\ref{(2.5)}) and (\ref{(2.8)}) is
equivalent to
\begin{equation}
\sum_{\nu=a,b} \bigg(\nu_1 \nu_7 + {1 \over 4} \nu_8 \bigg) =
\sum_{\nu=a,b} \left({1\over 8}\nu_7 +{1\over 4}\nu_8\right)=0. \label{(4.6)}
\end{equation}
However, equations (\ref{(4.5)}) and (\ref{(4.6)}) are equivalent by using
$a_6+a_7+a_8=0$ and $b_6+b_7+b_8=0$. Thus, we only need to consider one
of them.

In summary, we only need to solve the following
equations
$$
a_0 a_8 + b_0 b_8 = 0, \ \ \ \ \ a_2 a_6 + b_2 b_8
= 0,
$$
$$
\sum_{\nu=a,b}(\nu_7 + 2\nu_8) = 0, \ \ \ \sum_{\nu=a,b}(\nu_0 \nu_4 + \nu_4 \nu_8
+ \nu_1\nu_5 + \nu_3 \nu_7) = 0.
$$
Using the linear relationships, we have
\begin{eqnarray*}
a_0 &=& {1\over 4} - a_1 + a_5 + a_8, \ \ b_0 =
{1\over 4} - b_1 + b_5 + b_8\\
a_2 &=& -a_5 - a_8, \ \ \ b_2 = -b_5 - b_8,
\ \ \ a_6 = -a_7 - a_8, \ \ \ b_6 = -b_7 - b_8.
\end{eqnarray*}
Putting these linear equations in (\ref{(2.2)}) and (\ref{(2.3)}),
we get
\begin{eqnarray*}
a^2_8 + (a_5 - a_1 + {1\over 4}) a_8 + b^2_8 + (b_5
- b_1 + {1\over 4})b_8 &=& 0\\
a^2_8 + (a_5 + a_7) a_8 + a_5 a_7 + b^2_8 +(b_5 +
b_7)b_8 + b_5b_7 &=& 0.
\end{eqnarray*}
Subtracting the first one of the above two
equations from the second one, and using $a_1 + a_7
= p + {1\over 4}$, $b_1 + b_7 = q + {1\over 4}$, we
get
$$
pa_8 + qb_8 = -a_5 a_7 - b_5b_7.
$$
Using these linear relationships and (\ref{(2.11)}), we get
$$
\displaystyle pa_8 + qb_8 = {1\over 2} \sum_{\nu=a,b} (\nu_1 \nu_5 + \nu_3
\nu_7(\nu_5 - \nu_1 + {1\over 4})\nu_4).
$$
It follows that
$$
\displaystyle \sum_{\nu=a,b}(2\nu_5\nu_7 + \nu_1 \nu_5 + \nu_3 \nu_7 +
(\nu_5 - \nu_1 +{1\over 4})\nu_4) = 0
$$
which can be simplified to be
$$
\sum_{\nu=a,b}({1\over 4} \nu_5 + \nu_4^2) = 0
$$
That is, we have
\begin{eqnarray*}
0 &=& a^2_4 + b^2_4 + {1\over 4}(a_5 + b_5)\\
&=& {1\over 8}(a_4 + b_4) + {1\over 4}(p + q +
{1\over 2} \sqrt{p^2 + q^2}(\cos \beta + \sin
\beta)\\
&=& {1\over \sqrt{2}} \sqrt{p^2 + q^2}
\sin({\pi\over 4} + \beta).
\end{eqnarray*}
It follows that either $\beta = -{\pi \over 4}$,
${3\pi \over 4}$ or $p^2 + q^2 = 0$ which the latter occurs
when $\alpha = -{3\pi \over 4}$.

We first consider $\alpha = -{3\pi \over 4}$.  In
this situation, we have $p = 0$ and $q = 0$.  It
follows that $a_3 = b_3 = a_4 = b_4 = a_5 = b_5 =
0$.  It is clear that $p a_8 + qb_8 = -\sum_{\nu=a,b} \nu_5
\nu_7$ holds.  Thus, we only have two equations to
solve
$$
a_0 a_8 + b_0 b_8 = 0, \ \ \ a_8 + b_8 =-{1\over 2}(a_7 + b_7).
$$
Using $a_0 = {1\over 4} - a_1 + a_8$ and $b_0 =
{1\over 4} - b_1 + b_8$, we solve the above
equations and get
\begin{eqnarray*}
a_8 &=& {1\over 32}\bigg(-1 -\sqrt{2} \cos \gamma \pm
\sqrt{2 + \sqrt{2} (\cos \gamma + \sin \gamma)}\bigg)\\
b_8 &=& {1\over 32}\bigg(-1 -\sqrt{2} \sin \gamma \mp
\sqrt{2 + \sqrt{2}(\cos \gamma + \sin \gamma)}\bigg)\\
a_0 &=& {1\over 32}\bigg(1 +\sqrt{2} \cos \gamma \pm
\sqrt{2 + \sqrt{2} (\cos \gamma + \sin \gamma)}\bigg)\\
b_0 &=& {1\over 32}\bigg(1 +\sqrt{2} \sin \gamma \mp
\sqrt{2 + \sqrt{2} (\cos \gamma + \sin \gamma)}\bigg).
\end{eqnarray*}
Using the linear relationships, we have
\begin{eqnarray*}
a_2 &=& {1\over 32}\bigg(1 + \sqrt{2} \cos \gamma \mp
\sqrt{2 + \sqrt{2}(\cos \gamma + \sin \gamma)}\bigg)\\
b_2 &=& {1\over 32}\bigg(1 + \sqrt{2} \sin \gamma \pm
\sqrt{2 + \sqrt{2}(\cos \gamma + \sin \gamma)}\bigg)\\
a_6 &=& {1\over 32}\bigg(-1 - \sqrt{2} \cos \gamma \mp
\sqrt{2 + \sqrt{2}(\cos \gamma + \sin \gamma)}\bigg)\\
b_6 &=& {1\over 32}\bigg(-1 - \sqrt{2} \sin \gamma \pm
\sqrt{2 + \sqrt{2}(\cos \gamma + \sin \gamma)}\bigg)\\
a_1 &=& {3\over 16} - {\cos \gamma \over 8\sqrt{2}},
\ \ b_1 = {3\over 16} - {\sin \gamma \over
8\sqrt{2}}, \ \
a_7 = {1\over 16} + {\cos \gamma \over 8\sqrt{2}},
\ \ b_7 = {1\over 16} +{\sin \gamma \over
8\sqrt{2}}.
\end{eqnarray*}
Next we consider $\beta = -{\pi \over 4}$ or $\beta
= {3\pi \over 4}$.  We only need to consider $\beta = -{\pi
\over 4}$ while $p^2 + q^2 \ne 0$ because the other case
is a rotation of this one.  For $\beta = -{\pi\over 4}$,
we have three equations to solve:
\begin{equation}
a_0 a_8 + b_0 b_8 = 0, \ \ \
pa_8 + qb_8 = -\sum_{\nu=a,b} a_5 a_7, \ \ \
a_8 + b_8 = -{1\over 2} (a_7 + b_7).\label{(4.7)}
\end{equation}
Assuming $p \ne q$, i.e. $\alpha \ne {\pi \over
4}$, we can solve the second and third ones in the
above three equations for $a_8$ and $b_8$:  The
solutions for $a_8$ and $b_8$ are

\begin{eqnarray*}
b_8 &=& {1\over q - p}\bigg(-{p\over 2}(a_7 + b_7) + \sum_{\nu=a,b}
a_5 a_7\bigg)\\
a_8 &=& -{1\over 2}(a_7 + b_7) +{1\over p -
q}\bigg(-{p\over 2}(a_7 + b_7) + \sum_{\nu=a,b} a_5 a_7\bigg)
\end{eqnarray*}
leaving the relationship between $\alpha$ and
$\gamma$ as $a_0a_8 + b_0 b_8 = 0$.  Upon substitution and
simplification, we have
\begin{equation}
\sin(\alpha + {\pi \over 4}) - 2\sin(\gamma + {\pi
\over 4}) \sqrt{2(1 - \sin(\alpha + {\pi \over 4})}
+ \sin(2\gamma)-2 \ =\ 0. \label{(4.8)}
\end{equation}
This equation (\ref{(4.8)}) has a one parameter family of solutions given by
$$
\gamma=\left\{\begin{array}{ccccc}
-{1\over 2}\alpha+{7\pi \over 8}& {\rm or} & {1\over 2}\alpha-{3\pi \over 8}&
{\rm if} & 0< \alpha <{ \pi \over 4}\\
 {1\over 2}\alpha+{5\pi \over 8}& {\rm or} & -{1\over 2}\alpha-{ \pi \over 8}&
 {\rm if} & { \pi \over 4}< \alpha < 2\pi.
\end{array}\right.
$$

The final case is when $\beta=-{\pi\over 4}$ and $p=q$ (i.e.
$\alpha={\pi\over 4}$)
which implies that $$a_1=b_1=a_4=b_4={1\over 8},
a_3=a_7=b_5=b_7=0, a_5=b_3=-{1\over8}.$$
This reduces (\ref{(4.7)}) to
$$
a_0a_8+b_0b_8=0,\ \ \
a_8+b_8=0,
$$
which has two solutions: $a_8=b_8=0$ or $a_8={1\over 16}$ and
$b_8=-{1\over 16}$
yielding two rational solutions:
$$
{1\over 8}\left[\begin{array}{cccccc}
0 & 1 & 1 & 1 & 1 & 0\\
0 &0 &0 &0 &0& 0\\
0& -1 &1 &1& -1& 0\\
0& -1& 1& 1& -1& 0\\
0 &  0& 0& 0& 0 &0\\
0 & 1 & 1 & 1 & 1 & 0\end{array}\right],
\ \ \ \
{1\over 16}\left[\begin{array}{cccccc}
1 & 1 & 2 & 2 & 1 & 1\\
-1 &1 &0 &0 &1& -1\\
0& -2 &2 &2& -2& 0\\
0& -2& 2& 2& -2& 0\\
-1 & 1& 0& 0& 1 &-1\\
1 & 1 & 2 & 2 & 1 & 1\end{array}\right].
$$
\end{proof}

%Figure \ref{Figure 4.1} shows
%the second refinable function from solution 3) of
%Theorem \ref{Theorem 4.1} after 5 iterations of the cascade algorithm.
%\begin{center}
%\begin{figure}[ht]
%\vspace{24pt}
%\psone{plot2a.ps}{5}
%\caption{ An example refinable function from Subcase 2a}\label{Figure 4.1}
%\end{figure}
%\end{center}

%%%%%%%%%%%%%%%%
\subsection{Solution of Case 4}
%%%%%%%%%%%%%%%%
Finally, we consider Case 4 where both $a_1 +
a_4 + a_7 = 0$ and  $a_3 + a_4 + a_5 = 0$.  From equations (\ref{(2.16)})
and (\ref{(2.19)})
 we have $b_3 + b_4 +b_5 = 0$ and $b_1 + b_4 + b_7 = 0$.  As before,
using equations (\ref{(2.9)}), (\ref{(2.13)}) and (\ref{(2.14)}), we have
$$
\sum_{\nu=a,b}((\nu_0 + \nu_1 + \nu_2)^2 + (\nu_3 + \nu_4 + \nu_5)^2 +
(\nu_6 + \nu_7 + \nu_8)^2) = {1\over 8}.
$$
It follows that $\sum_{\nu=a,b}((\nu_0 + \nu_1 + \nu_2)^2 + (\nu_6 + \nu_7 +\nu_8)^2) =
{1\over 8}$.  Using equations (\ref{(2.15)}) and (\ref{(2.17)}), we have
$$
(a_0 + a_1 + a_2)^2 + (a_6 + a_7 + a_8)^2 ={1\over 16}.
$$
 By Lemma \ref{Lemma 2.2}, the above equation is
$$
({1\over 4} - (a_6 + a_7 + a_8))^2 + (a_6 +a_7 + a_8)^2 = {1\over 16}.
$$
Thus, $a_6 + a_7 + a_8 = {1\over 4}$ or $a_6 + a_7
+ a_8 = 0$.

Also, using equations (\ref{(2.8)}), (\ref{(2.12)}) and (\ref{(2.14)}), we have
$$
\sum_{\nu=a,b}\left((\nu_0 + \nu_3 + \nu_6)^2 + (\nu_1 + \nu_4 + \nu_7)^2 +
(\nu_2 + \nu_5 + \nu_8)^2 \right)= {1\over 8}.
$$
It follows that $\sum_{\nu=a,b}\left((\nu_0 + \nu_3 + \nu_6)^2 + (\nu_2 + \nu_5 + \nu_8)^2
\right)= {1\over 8}.$
So, we have
$$
(a_0 + a_3 + a_6)^2 + (a_2 + a_5 + a_8)^2 = {1\over 16}.
$$
Again by Lemma \ref{Lemma 2.2},
$\left({1\over 4} - (a_0 + a_3 + a_6)\right)^2 + (a_0 + a_3 +
a_6)^2 = {1\over 16}.$
Thus, $a_0 + a_3 + a_6 = 0$ or $a_0 + a_3 + a_6 = {1\over 4}$.

Therefore, we have four subcases to consider.  In addition to
$$a_1 + a_4 + a_7 =0, a_3 + a_4 + a_5 = 0,
b_1 + b_4 + b_7 =0, b_3 + b_4 + b_5 = 0, $$
we have
\begin{itemize}
\item Subcase 4a:
\begin{eqnarray*}
&&a_0 + a_1 + a_2 = {1\over 4},\ \ \ \  a_0 + a_3 + a_6 = {1\over 4}, \ \ \ \
a_6 + a_7 + a_8 = 0, \ \ \ \ a_2 + a_5 + a_8 = 0\\
&&b_0 + b_1 + b_2 ={1\over 4},\ \ \ \  b_0 + b_3 + b_6 = 0,\ \ \ \
b_6 + b_7 + b_8 =0, \ \ \ \ b_2 + b_5 + b_8 = {1\over 4}
\end{eqnarray*}
\item Subcase 4b:
\begin{eqnarray*}
&&a_0 + a_1 + a_2 = 0,  a_0 + a_3 + a_6 = {1\over 4},\ \ \ \
a_6 + a_7 + a_8 = {1\over 4},\ \ \ \  a_2 + a_5 + a_8 = 0,\\
&&b_0 + b_1 + b_2 =0,\ \ \ \   b_0 + b_3 + b_6 =0 ,\ \ \ \
b_6 + b_7 + b_8 ={1\over 4},\ \ \ \  b_2 + b_5 + b_8 = {1\over 4}\\
\end{eqnarray*}
\item Subcase 4c:
\begin{eqnarray*}
&&a_0 + a_1 + a_2 ={1\over 4} ,\ \ \ \   a_0 + a_3 + a_6 = 0,\ \ \ \
a_6 + a_7 + a_8 = 0,\ \ \ \  a_2 + a_5 + a_8 = {1\over 4}\\
&&b_0 + b_1 + b_2 ={1\over 4},\ \ \ \   b_0 + b_3 + b_6 = {1\over 4},\ \ \ \
b_6 + b_7 + b_8 =0,\ \ \ \  b_2 + b_5 + b_8 = 0,
\end{eqnarray*}
\item Subcase 4d:
\begin{eqnarray*}
&&a_0 + a_1 + a_2 =0 ,\ \ \ \   a_0 + a_3 + a_6 = 0,\ \ \ \
a_6 + a_7 + a_8 = {1\over 4},\ \ \ \  a_2 + a_5 + a_8 = {1\over 4}\\
&&b_0 + b_1 + b_2 =0 ,\ \ \ \   b_0 + b_3 + b_6 ={1\over 4},\ \ \ \
b_6 + b_7 + b_8 ={1\over 4},\ \ \ \  b_2 + b_5 + b_8 = 0.
\end{eqnarray*}
\end{itemize}
We only study the Subcase 4a and leave the
other three subcases to the interested reader.
With the linear constraints, we tackle the nonlinear conditions
(\ref{(2.2)})-(\ref{(2.14)}).  We use (\ref{(2.2)}) and (\ref{(2.3)})
to simplify (\ref{(2.8)}) and (\ref{(2.9)}) as follows:
\begin{eqnarray}
0 &=&
 \sum_{\nu=a,b} \nu_0 \nu_6 + \nu_1 \nu_7 + \nu_2 \nu_8 \ =\
\sum_{\nu=a,b}(\nu_0 + \nu_2)(\nu_6 + \nu_8) + \nu_1 \nu_7\nonumber\\
&=& \sum_{\nu=a,b}\left({1\over 4}  - \nu_1\right)(-\nu_7) + \nu_1 \nu_7
= 2\sum_{\nu=a,b}(\nu_1 \nu_7 - {1\over 8} \nu_7), \label{(5.1)}
\end{eqnarray}
and similarly,
\begin{eqnarray}
0 &=& \sum_{\nu=a,b} (\nu_0 \nu_2 + \nu_3 \nu_5 + \nu_6 \nu_8)
\ =\  \sum_{\nu=a,b}(\nu_0 + \nu_6)(\nu_2 + \nu_8) + \nu_3 \nu_5\nonumber\\
&=& 2\left(({1\over 4} - a_3)(-a_5) + a_3 a_5 +
(-b_3)({1\over 4} - b_5) + b_3b_5\right)\\
&=&  4\left(-{a_5\over 8} - {b_3\over 8} + a_3 a_5 + b_3 b_5\right).
\label{(5.2)}
\end{eqnarray}
As we did before, we have $a_4 = -{1\over 16} + {1\over 8\sqrt{2}} \cos \alpha$
and $b_4 = -{1\over 16} + {1\over 8\sqrt{2}} \sin \alpha$.  It follows that
\begin{equation}
a^2_4 + b^2_4 + {1\over 8}(a_4 + b_4) = 0.  \label{(5.3)}
\end{equation}
This fact will be used later.

With (\ref{(2.8)}), i.e., (\ref{(5.1)}), we can see that (\ref{(2.12)})
holds.  Indeed, the
left-hand side of (\ref{(2.12)}) is, by using (\ref{(5.3)}),
\begin{eqnarray*}
\sum_{\nu=a,b} &&\hspace{-.2in}(\nu_1(\nu_0 + \nu_2) + \nu_4(\nu_3 + \nu_5)
+ \nu_7(\nu_6 + \nu_8))
\ =\  \sum_{\nu=a,b}(\nu_1({1\over 4} - \nu_1) - a^2_4 - a^2_7)\\
&=& \sum_{\nu=a,b}({1\over 4}(-\nu_4 - \nu_7) - a^2_1 - a^2_7 - a^2_4)
\ =\  -\sum_{\nu=a,b}({1\over 4} \nu_4 + (\nu_1 + \nu_7)^2 + a^2_4)\\
&=& -\sum_{\nu=a,b}({1\over 4} \nu_4 + 2\nu_4^2)\  =\  0.
\end{eqnarray*}
Similarly, we can show that with (\ref{(2.9)}), i.e. (\ref{(5.2)}),
equation (\ref{(2.13)}) holds.

If we add equations (\ref{(2.10)}) and (\ref{(2.11)}) together,
we have by (\ref{(5.3)})
\begin{eqnarray*}
\sum_{\nu=a,b} &&\hspace{-.2in}\nu_4 (\nu_0 + \nu_2) + \nu_1(\nu_3 + \nu_5) +
\nu_7(\nu_3 + \nu_5) + \nu_4(\nu_6 + \nu_8)\\
&=&\sum_{\nu=a,b}(\nu_4({1\over 4} -\nu_1) - \nu_1\nu_4 - \nu_7 \nu_4 -
\nu_4 \nu_7)\\
&=& \sum_{\nu=a,b} {1\over 4} \nu_4 - 2\nu_4(\nu_1+ \nu_7)
\ =\  \sum_{\nu=a,b}{1\over 4} \nu_4 + 2\nu^2_4 \ =\  0.
\end{eqnarray*}
That is, the addition of (\ref{(2.10)}) and  (\ref{(2.11)}) is always true.
We only need to consider one of these
two equations.  Next we simplify equations (\ref{(2.4)}) - (\ref{(2.7)}).

Adding (\ref{(2.4)}) and (\ref{(2.8)}) with (\ref{(2.3)}), we have
\begin{eqnarray}
0 &=& \sum_{\nu=a,b}(\nu_2(\nu_7 + \nu_8) + (\nu_0 + \nu_1)
\nu_6 + \nu_1 \nu_7)\nonumber \\
&=& \sum_{\nu=a,b}({1\over 4}\nu_6 + \nu_1 \nu_7)
\ =\  \sum_{\nu=a,b}\left({1\over 4}\nu_6 + {\nu_7\over 8}
\right). \label{(5.4)}
\end{eqnarray}
Adding (\ref{(2.5)}) and (\ref{(2.8)}) together, we have
\begin{eqnarray}
0 &=& \sum_{\nu=a,b}(\nu_1 + \nu_2) \nu_8 + \nu_0(\nu_7 + \nu_6) +
\nu_1 \nu_7\nonumber\\
&=& \sum_{\nu=a,b}({1\over 4} \nu_8 + \nu_1 \nu_7) \ =\
\sum_{\nu=a,b} \left({1\over 4} \nu_8 +{1\over 8} \nu_8\right). \label{(5.5)}
\end{eqnarray}
It is easy to see that equations (\ref{(5.4)}) and (\ref{(5.5)})
are equivalent by using
$a_6+a_7+a_8=0$ and $b_0+b_1+b_2=0$.

Adding (\ref{(2.6)}) and (\ref{(2.9)}) together, we have
\begin{eqnarray}
0 &=& \sum_{\nu=a,b}(\nu_6(\nu_5 + \nu_8) +
\nu_2(\nu_0 + \nu_3) + \nu_3 \nu_5)\nonumber\\
&=& {1\over 4} a_2 + a_3 a_5 + {1\over 4} b_6 + b_3b_5
\ =\  {1\over 4}(a_2+b_6)+{1\over 8}(a_5+b_3). \label{(5.6)}
\end{eqnarray}
Adding (\ref{(2.7)}) and (\ref{(2.9)}) together with (\ref{(2.2)}), we have
\begin{eqnarray}
0 &=& \sum_{\nu=a,b}(\nu_8(\nu_3 + \nu_6) +
\nu_0(\nu_2 + \nu_5) + \nu_3 \nu_5)\nonumber\\
&=& {1\over 4} a_8 + a_3 a_5 + b_3 b_5 + {1\over 4} b_0
\ =\  {1\over 4}(a_8+b_0)+{1\over 8}(a_5+b_3). \label{(5.7)}
\end{eqnarray}
In fact, equations (\ref{(5.6)}) and (\ref{(5.7)}) are equivalent by using
$a_2+a_5+a_8=0$ and $b_0+b_3+b_6=0$.

Next we solve (\ref{(5.1)}) using $a_1 + a_4 + a_7 = 0$ and
$b_1 + b_4 + b_7 = 0$.
Letting $p = -a_4$ and $q = -b_4$, we have $pa_7 - a^2_7 -
{a_7 \over 8} + qb_7 -b^2_7 - {b_7 \over 8} = 0$.  That is
$$
(a_7 - {p\over 2} + {1\over 16})^2 + (b_7 - {q\over 2} + {1\over 16})^2 =
{1\over 4} \left((p - {1\over 8})^2 + (q - {1\over 8})^2\right).
$$
Recalling (\ref{(5.3)}), i.e., $p^2 - {1\over 4} p + q^2 - {1\over 4}q = 0$,
we have
\begin{eqnarray*}
(p - {1\over 8})^2 + (q - {1\over 8})^2 &=& {1\over 8}({1\over 4} - p - q) =
{1\over 8}\left({1\over 8} +
{1\over 8\sqrt{2}}(\cos \alpha + \sin \alpha)\right)\\
&=& {1\over 64}\left(1 + {1\over \sqrt{2}}(\cos \alpha + \sin \alpha)\right)
 \ =\  {1\over 64}\left(1 + \sin(\alpha + {\pi \over 4})\right).
\end{eqnarray*}
Thus,  we have
$$a_7 = {p\over 2}  - {1\over 16} + {1\over 16} \sqrt{1 + \sin(\alpha +
{\pi \over 4})} \cos \beta, \ \ \
b_7 = {q\over 2}  - {1\over 16} + {1\over 16} \sqrt{1 + \sin(\alpha +
{\pi \over 4})} \sin \beta.
$$
Similarly we can solve (\ref{(5.2)}) using $a_3 + a_4 + a_5 = 0$
and $b_3 + b_4 + b_5 =0 $ giving us
$$
a_5 = {p\over 2}  - {1\over 16} + {1\over 16} \sqrt{1 + \sin(\alpha +
{\pi \over 4})} \cos \gamma, \ \ \
b_3 = {q\over 2}  - {1\over 16} + {1\over 16} \sqrt{1 + \sin(\alpha +
{\pi \over 4})} \sin \gamma.
$$
It follows that
\begin{eqnarray*}
a_1 &= {p\over 2}  - {1\over 16} + {1\over 16} \sqrt{1 + \sin(\alpha +
{\pi \over 4})} \cos \beta, \ \ \
b_1 = {q\over 2}  - {1\over 16} + {1\over 16} \sqrt{1 + \sin(\alpha +
{\pi \over 4})} \sin \beta\\
a_3 &= {p\over 2}  +  {1\over 16} + {1\over 16} \sqrt{1 + \sin(\alpha +
{\pi \over 4})} \cos \gamma, \ \ \
b_5 = {q\over 2}  +  {1\over 16} + {1\over 16} \sqrt{1 + \sin(\alpha +
{\pi \over 4})} \sin \gamma.
\end{eqnarray*}
So, we still need to satisfy (\ref{(2.2)}), (\ref{(2.3)}), (\ref{(5.5)}),
(\ref{(5.7)}), and (\ref{(2.11)}).
Using the linear relationships for Subcase
4a, (\ref{(2.2)}) and (\ref{(2.3)})
become
\begin{eqnarray}
a_8^2+({1\over 4}-a_1+a_5)a_8+b_8^2(-b_1+b_5)b_8
&=&0\label{(5.8)}\\
a_8^2+(a_5+a_7)a_8+b_8^2+(b_5+b_7-{1\over 4})b_8
&=&-a_5a_7-b_5b_7+{b_7\over 4}.\label{(5.9)}
\end{eqnarray}
After subtracting (\ref{(5.8)}) from (\ref{(5.9)}) and using
$a_1+a_7=p$ and $b_1+b_7=q$,
we can replace one of these equations with
\begin{eqnarray}
(p-{1\over 4})a_8+(q-{1\over 4})b_8&={1\over 4}b_7-a_5a_7-b_5b_7.
\label{(5.10)}
\end{eqnarray}
Moreover, the linear relationships combined with (\ref{(2.11)}) yield
\begin{eqnarray*}
pa_8+qb_8&=&{1\over 8}a_4+{1\over 2}\sum_{\nu=a,b} \nu_1\nu_5+
\nu_3\nu_7+(\nu_5-\nu_1)\nu_4.
\end{eqnarray*}
Thus, (\ref{(5.5)}), (\ref{(5.7)}), and (\ref{(2.11)}) can be replaced by
\begin{eqnarray}
a_8+b_8 &=&-{1\over 2}(a_7+b_7)\label{(5.11)}\\
a_8+b_8 &=&-{1\over 2}(a_5-2b_1+b_3+2b_5)\label{(5.12)}\\
a_8+b_8&=&-b_7+{1\over 2}a_4+2\sum_{\nu=a,b}
2\nu_5\nu_7+\nu_1\nu_5+\nu_3\nu_7+(\nu_5-\nu_1)\nu_4.\label{(5.13)}
\end{eqnarray}
Now, we only need to satisfy (\ref{(5.8)}), (\ref{(5.10)}), and
(\ref{(5.11)})-(\ref{(5.13)}).

Equating the right-hand sides of (\ref{(5.11)}) and (\ref{(5.12)}) gives
$$
\sqrt{1 + \sin(\alpha + {\pi \over 4})}
(\cos \gamma-\sin \gamma-\cos \beta + \sin\beta)=0.
$$
This constraint is satisfied whenever $\alpha=-{3\pi \over 4},\gamma=\beta,$ or
$\gamma=-\beta-{\pi \over 2}$.

Equating the right hand sides of (\ref{(5.11)}) and (\ref{(5.13)}) gives
\begin{eqnarray*}
-{1\over 2}(a_7+b_7)&=&-b_7+{1\over 2}a_4+
2\sum_{\nu=a,b}(\nu_5(\nu_1+\nu_7)+(\nu_3+\nu_5)\nu_7+\nu_4\nu_5-\nu_1\nu_4\\
&=&-b_7+{1\over 2}a_4-2\sum_{\nu=a,b} \nu_4(\nu_1+\nu_7).
\end{eqnarray*}
Thus,
\begin{eqnarray*}
0&=&{1\over 2}a_4+2\sum_{\nu=a,b} \nu_4^2+{\nu_4\over 2}+{\nu_7\over 2}\\
&=&2(p^2+q^2)-{p\over 4}-{q\over 4}+
{1\over 32}\sqrt{1+\sin(\alpha+{\pi\over 4})}(\cos\beta-\sin\beta)\\
&=&{1\over 32}\sqrt{1+\sin(\alpha+{\pi\over 4})}(\cos\beta-\sin\beta).
\end{eqnarray*}
Therefore, we only need to solve (\ref{(5.8)}), (\ref{(5.10)}), and
(\ref{(5.11)}) when
$\alpha=-{3\pi \over 4}, \beta={\pi\over 4},$ or $\beta=-{3\pi \over 4}$
and $\gamma=\beta$ or $\gamma=-\beta-{\pi \over 2}$.

We begin with $\alpha=-{3\pi \over 4}$, then
$$
a_4=b_4=-{1 \over 8}, \ \ \ a_5=a_7=b_3=b_7=0,\ \ \
a_1=a_3=b_1=b_5={1 \over 8}
$$
which reduces (\ref{(5.8)}), (\ref{(5.10)}), and (\ref{(5.11)}) to
$$
(a_8+{1\over 8})a_8+b_8^2=0, \ \ \ a_8+b_8=0.
$$
These equations yield two solutions: $a_8=b_8=0$ or
$a_8=-{1\over 16}$ and $b_8={1\over 16}.$

Now, with $\beta=\gamma={\pi\over 4}$, we solve
for $a_8$ and $b_8$ using the linear equations (\ref{(5.10)}) and
(\ref{(5.11)}), we have
$$
a_8=-{p\over 4}-{\sqrt{2}\over 8}\sqrt{1+\sin(\alpha+\pi/4)}), \ \ \
b_8=-{q\over 4}+{1\over 16}.
$$
Plugging these solutions into (\ref{(5.8)}) and simplifying we have
$$
\cos(\alpha-{\pi\over 4})-1=4\sqrt{2+2\cos(\alpha-{\pi\over 4})}
$$
which has no real solution.

Similarly for $\beta=\gamma=-{3\pi\over 4}$, (\ref{(5.10)}) and
(\ref{(5.11)}) yield
$$
a_8=-{p\over 4}+{\sqrt{2}\over 8}\sqrt{1+\sin(\alpha+\pi/4)}), \ \ \
b_8=-{q\over 4}+{1\over 16},
$$
but now (\ref{(5.8)}) reduces to
$$
\cos(\alpha-{\pi\over 4})-1=-4\sqrt{2+2\cos(\alpha-{\pi\over 4})}
$$
which has two solutions: $\alpha={\pi\over 4}\pm\cos^{-1}(17-8\sqrt{5})$.

Finally for $\beta={\pi\over 4}$ and $\gamma=-{3\pi\over 4}$,
the linear equations
produce
$$
a_8=-{p\over 4}, \ \ \
b_8=-{q\over 4}+{1\over 16}-
{\sqrt{2}\over 32}\sqrt{1+\sin(\alpha-{\pi \over 4})},
$$
and similarly for $\beta=-{3\pi\over 4}$ and  $\gamma={\pi\over 4}$
$$
a_8=-{p\over 4}, \ \ \
b_8=-{q\over 4}+{1\over 16}+
{\sqrt{2}\over 32}\sqrt{1+\sin(\alpha-{\pi \over 4})}.
$$
Both of these choices for $a_8$ and $b_8$ after being substituted into
(\ref{(5.8)}) require
$$
p^2+q^2=0
$$
which is satisfied by $\alpha={\pi \over 4}$.

Therefore, the complete solution for Subcase 4a has 6 solitary
solutions.  The four rational solutions are given below:
\begin{eqnarray*}
&&{1\over 8}\left[\begin{array}{cccccc}
1&0&\ \ 1& \ \ 1 & 0 & 1\\
0&0&\ \ 0& \ \ 0 &0& 0\\
1&0&   -1&-1& 0& 1\\
1&0&   -1& -1& 0& 1\\
0&0&\ \ 0& \ \ 0& 0 &0\\
1&0&\ \ 1& \ \ 1 & 0 & 1\end{array}\right], \ \ \ \
{1\over 16}\left[\begin{array}{cccccc}
1 & \ \ 1 &  \ \ 2 &  \ \ 2 &  \ \ 1 & 1\\
1 &-1 & \ \ 0 &  \ \ 0 &-1& 1\\
2& \ \ 0 &-2 &-2&  \ \ 0& 2\\
2&  \ \ 0& -2& -2&  \ \ 0& 2\\
1 &  -1&  \ \ 0&  \ \ 0& -1 &1\\
1 &  \ \ 1 &  \ \ 2 &  \ \ 2 &  \ \ 1 & 1\end{array}\right],\\
&&{1\over 8}\left[\begin{array}{cccccc}
1 & 0 &  \ \ 1 &  \ \ 1 & 0 & 1\\
1 &0 &-1 &-1 &0& 1\\
0& 0 & \ \ 0 & \ \ 0& 0& 0\\
0& 0&  \ \ 0& \ \ 0& 0& 0\\
1 &  0& -1& -1& 0 &1\\
1 & 0 &  \ \ 1 &  \ \ 1 & 0 & 1\end{array}\right], \ \ \ \
{1\over 8}\left[\begin{array}{cccccc}
1 &  \ \ 1 & 0 & 0 &  \ \ 1 & 1\\
0 & \ \ 0 &0 &0 & \ \ 0& 0\\
1& -1 &0 &0& -1& 1\\
1& -1& 0& 0& -1& 1\\
0 &   \ \ 0& 0& 0&  \ \ 0 &0\\
1 &  \ \ 1 & 0 & 0 &  \ \ 1 & 1\end{array}\right].
\end{eqnarray*}

%Figure\ref{Figure 5.1} shows the second refinable function from above after 5 iterations of the cascade algorithm.

%\begin{center}
%\begin{figure}[ht]
%\vspace{24pt}
%\psone{plot4a.ps}{5}
%\caption{ An example refinable function from Subcase 4a}\label{Figure 5.1}
%\end{figure}
%\end{center}

%%%%%%%%%%%%%%%%%%%
\section{ORTHOGONALITY}
%%%%%%%%%%%%%%%%%%%
In this section, we discuss the orthogonality of the
solutions from Cases 1-4.  We begin with a
review of the Lawton condition as well as a well known
necessary and sufficient condition for orthogonality.
We conclude this section with a numerical experiment
consisting of a one-level decomposition of two gray-scale
images using a nonseparable filter from Subcase 4a,
and compare its performance with Haar and D4.

Let
$$m(e^{i\omega_1},e^{i\omega_2})=\sum_{k,\ell}
h_{k,l}e^{ik\omega_1}e^{i\ell\omega_2},
$$
where the $h_{k,\ell}$'s are the $a_i$'s and $b_i$'s as discussed
previously in Section \ref{Section 2}. Define
\begin{eqnarray}
\hat{\phi}(\omega_1,\omega_2)&=&
\prod_{k=1}^\infty m\left(\displaystyle e^{{i\omega_1} \over 2^k},
\displaystyle e^{{i\omega_2} \over 2^k}\right).
\label{(6.1)}
\end{eqnarray}
For the coefficients as defined in Section \ref{Section 2}, we know that
$\phi$ is well defined and is in $L_2({\bf R}^2)$.  Define
\begin{equation}
\alpha_{\ell_1,\ell_2}\ =\ \int_{{\bf R}^2}
\phi(x,y)\overline{\phi(x-\ell_1,y-\ell_2)}{\rm d}x{\rm d}y.\label{(6.2)}
\end{equation}
Thus, if $\alpha_{\ell_1,\ell_2}=\delta_{\ell_1,\ell_2}$, then $\phi$ is
orthonormal.  By the refinement equation
\begin{equation}
\phi(x,y)=4\sum_{k_1,k_2=0}^5 h_{k_1,k_2}\phi(2x-k_1,2y-k_2). \label{(6.3)}
\end{equation}
Using (\ref{(6.3)}) in (\ref{(6.2)}) we have
\begin{eqnarray}
\alpha_{\ell_1,\ell_2}
&=&4\sum_{n_1,n_2}\left(\sum_{k_1,k_2}h_{k_1,k_2}h_{k_1+n_1-2\ell_1,
k_2+n_2-2\ell_2} \right)\alpha_{n_1,n_2}.\label{(6.4)}
\end{eqnarray}
Because the $supp(\phi)\subset [0,5]^2$, the only possible
nonzero $\alpha_{\ell_1,\ell_2}$ are for $-4\leq\ell_1,\ell_2\leq 4$.
Let $\alpha$ be
the vector of length 81 consisting of the $\alpha_{\ell_1,\ell_2}$'s for some
fixed ordering of the indices in the range of $-4\leq\ell_1,\ell_2\leq 4$
and define the matrix
\begin{equation}
A_{(\ell_1,\ell_2),(n_1,n_2)}=4\sum_{k_1,k_2}h_{k_1,k_2}h_{k_1+n_1-2\ell_1,
k_2+n_2-2\ell_2}\label{(6.5)}
\end{equation}
for this same ordering.  Then equation (\ref{(6.4)}) says that $\alpha$ is an
eigenvector of $A$ with eigenvalue$\lambda=1$, i.e., $\alpha=A\alpha.$
Now, condition (i) of Section \ref{Section 1} implies that
$$
4\sum_{k_1,k_2}h_{k_1,k_2}h_{k_1-2j_1,k_2-2j_2}x^{2j_1}y^{2j_2}=1,
$$
i.e.
$$
4\sum_{k_1,k_2}h_{k_1,k_2}h_{k_1-2j_1,k_2-2j_2}=\delta_{j_1,j_2}.
$$
Thus the vector $\delta$ of length 81 consisting of the entries
$\delta_{\ell_1,\ell_2}$ for the same ordering as before is also
an eigenvector for $A$ with eigenvalue
$\lambda=1$.  For completeness, we state the generalization of
Lawton's  condition (cf. \cite{L}) in $R^2$.

\begin{theorem} \label{Theorem 6.1}  Let $m(x,y)$ be a given polynomial
satisfying (i) and (ii) and A a matrix defined as in equation (\ref{(6.5)})
for the coefficients of $m(x,y)$.
Let $\phi$ be the function generated by equation (\ref{(6.1)}).
If $\lambda=1$ is a non-degenerate eigenvalue of $A$, then
$\{\phi(\cdot-\ell_1,\cdot-\ell_2)|(\ell_1,\ell_2)\in{\bf Z}^2\}$
is an orthonormal set.
\end{theorem}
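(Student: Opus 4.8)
The plan is to exploit the two eigenvector relations that the preceding discussion has already established for the $81\times 81$ matrix $A$. On the one hand, the autocorrelation vector $\alpha$ satisfies $\alpha = A\alpha$ by equation (\ref{(6.4)}); on the other, as the paragraph before the theorem shows, the Kronecker vector $\delta$ is also an eigenvector of $A$ for the eigenvalue $\lambda=1$. Thus both $\alpha$ and $\delta$ lie in the eigenspace of $A$ belonging to $\lambda=1$. The hypothesis that $\lambda=1$ is non-degenerate means precisely that this eigenspace is one-dimensional, so $\alpha$ and $\delta$ must be linearly dependent; that is, $\alpha_{\ell_1,\ell_2} = c\,\delta_{\ell_1,\ell_2}$ for a single scalar $c$. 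In particular $\alpha_{\ell_1,\ell_2}=0$ whenever $(\ell_1,\ell_2)\neq(0,0)$, and $c=\alpha_{0,0}$. It then remains only to prove $c=1$.

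To pin down the constant I would pass to the periodization $\Phi(\omega_1,\omega_2)=\sum_{\ell_1,\ell_2}|\hat{\phi}(\omega_1+2\pi\ell_1,\omega_2+2\pi\ell_2)|^2$. A standard Plancherel computation identifies the Fourier coefficients of this $2\pi$-periodic function with the numbers $\alpha_{\ell_1,\ell_2}$ defined in (\ref{(6.2)}), so the relation $\alpha_{\ell_1,\ell_2}=c\,\delta_{\ell_1,\ell_2}$ forces $\Phi\equiv c$. Evaluating at the origin gives $c=\Phi(0,0)=\sum_{\ell_1,\ell_2}|\hat{\phi}(2\pi\ell_1,2\pi\ell_2)|^2$, so the whole question reduces to evaluating $\hat{\phi}$ on the integer lattice points $2\pi(\ell_1,\ell_2)$.

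The main obstacle, and the only place where condition (ii) is genuinely needed, is to show that $\hat{\phi}(2\pi\ell_1,2\pi\ell_2)=0$ for every $(\ell_1,\ell_2)\neq(0,0)$, so that only the term $|\hat{\phi}(0,0)|^2$ survives. First, evaluating condition (ii) at $x=y=1$ and using $m(1,1)=1$ from condition (i) forces $m(-1,1)=m(1,-1)=m(-1,-1)=0$, i.e. $m$ vanishes at the three aliasing points of the torus. Now fix $(\ell_1,\ell_2)\neq(0,0)$ and inspect the product (\ref{(6.1)}): its $k=1$ factor is $m((-1)^{\ell_1},(-1)^{\ell_2})$. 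If $\ell_1$ and $\ell_2$ are not both even, this factor is one of the three zeros just found and the whole product vanishes; if both are even, the factor equals $m(1,1)=1$ and the remaining product is exactly $\hat{\phi}(\pi\ell_1,\pi\ell_2)=\hat{\phi}(2\pi(\ell_1/2),2\pi(\ell_2/2))$. Iterating this dyadic halving, and since $(\ell_1,\ell_2)\neq(0,0)$, after finitely many steps we reach a lattice point with at least one odd coordinate, which contributes a vanishing factor. Hence $\hat{\phi}(2\pi\ell_1,2\pi\ell_2)=0$ for all nonzero $(\ell_1,\ell_2)$.

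Finally, $\hat{\phi}(0,0)=\prod_{k\geq1}m(1,1)=1$, so $c=\Phi(0,0)=1$ and therefore $\alpha_{\ell_1,\ell_2}=\delta_{\ell_1,\ell_2}$; by (\ref{(6.2)}) this says exactly that $\{\phi(\cdot-\ell_1,\cdot-\ell_2)\mid(\ell_1,\ell_2)\in{\bf Z}^2\}$ is an orthonormal set. I would regard the dyadic-reduction argument for the vanishing of $\hat{\phi}$ on the lattice as the crux, together with the routine but essential check that the Fourier coefficients of $\Phi$ really are the $\alpha_{\ell_1,\ell_2}$; the fact that $\phi\in L_2({\bf R}^2)$ (already recorded above) guarantees that these manipulations are legitimate, and everything else is bookkeeping with the finite matrix $A$.
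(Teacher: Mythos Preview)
Your argument is correct and follows the classical Lawton route: both $\alpha$ and $\delta$ lie in the $\lambda=1$ eigenspace of $A$, non-degeneracy forces $\alpha=c\,\delta$, and the normalization $c=1$ is pinned down via the periodization $\Phi$ together with the dyadic-reduction argument showing $\hat\phi(2\pi\ell)=0$ for $\ell\neq 0$. Each step is sound.

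However, the paper does not actually supply a proof of this theorem: it is stated ``for completeness'' with a citation to Lawton's original paper \cite{L}, and then used as a black box in the subsequent discussion of orthogonality. So there is no proof in the paper to compare against; your write-up is exactly the standard argument and fills in what the authors leave to the reference.
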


We also need the following well-known necessary and
sufficient condition for orthonormality.
\begin{theorem}\label{Theorem 6.2}  Let $m(x,y)$ be a given polynomial
satisfying (i) and (ii). Let $\phi$ be the function generated by
equation (\ref{(6.1)}). Then
$\{\phi(\cdot-\ell_1,\cdot-\ell_2)|(\ell_1,\ell_2)\in{\bf Z}^2\}$
is an orthonormal set if and only if
$$ \sum_{(k,\ell)\in {\bf Z}^2} |\widehat{\phi}((\omega_1,\omega_2)+
2\pi (k,\ell))|^2 = 1, \quad \forall (\omega_1,\omega_2)\in
[-\pi,\pi]^2.$$.
\end{theorem}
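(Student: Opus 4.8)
The plan is to recognize the periodized quantity $\sum_{(k,\ell)\in{\bf Z}^2}|\widehat{\phi}((\omega_1,\omega_2)+2\pi(k,\ell))|^2$ as a $2\pi$-periodic function whose Fourier coefficients are precisely the inner products $\alpha_{\ell_1,\ell_2}$ from equation (\ref{(6.2)}), and then to invoke the uniqueness of Fourier series. By the definition (\ref{(6.2)}), the set $\{\phi(\cdot-\ell_1,\cdot-\ell_2)\mid(\ell_1,\ell_2)\in{\bf Z}^2\}$ is orthonormal exactly when $\alpha_{\ell_1,\ell_2}=\delta_{\ell_1,\ell_2}$ for every $(\ell_1,\ell_2)$, so it suffices to show that this condition on the $\alpha$'s is equivalent to the stated spectral identity.

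First I would pass to the Fourier side. Since $\phi\in L_2({\bf R}^2)$ and the Fourier transform of $\phi(\cdot-\ell_1,\cdot-\ell_2)$ is $e^{-i(\ell_1\omega_1+\ell_2\omega_2)}\widehat{\phi}(\omega_1,\omega_2)$, Plancherel's theorem yields, up to the usual normalization constant,
$$
\alpha_{\ell_1,\ell_2}=\frac{1}{(2\pi)^2}\int_{{\bf R}^2}|\widehat{\phi}(\omega_1,\omega_2)|^2\,e^{i(\ell_1\omega_1+\ell_2\omega_2)}\,{\rm d}\omega_1{\rm d}\omega_2.
$$
Next I would split ${\bf R}^2$ into the translated cells $[-\pi,\pi]^2+2\pi(k,\ell)$ and use the $2\pi$-periodicity of $e^{i(\ell_1\omega_1+\ell_2\omega_2)}$ to fold the integral onto a single period, obtaining
$$
\alpha_{\ell_1,\ell_2}=\frac{1}{(2\pi)^2}\int_{[-\pi,\pi]^2}\Phi(\omega_1,\omega_2)\,e^{i(\ell_1\omega_1+\ell_2\omega_2)}\,{\rm d}\omega_1{\rm d}\omega_2,
$$
where $\Phi(\omega_1,\omega_2):=\sum_{(k,\ell)\in{\bf Z}^2}|\widehat{\phi}((\omega_1,\omega_2)+2\pi(k,\ell))|^2$. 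Thus the numbers $\alpha_{\ell_1,\ell_2}$ are, up to the constant, exactly the Fourier coefficients of the periodic function $\Phi$.

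Finally I would compare with the constant function $1$, whose Fourier coefficients are precisely $\delta_{\ell_1,\ell_2}$. Since $\phi\in L_2$ forces $\widehat{\phi}\in L_2({\bf R}^2)$, Tonelli's theorem shows that $\Phi$ is a well-defined $2\pi$-periodic function lying in $L_1([-\pi,\pi]^2)$; the uniqueness theorem for Fourier series of integrable functions on the torus then gives that $\alpha_{\ell_1,\ell_2}=\delta_{\ell_1,\ell_2}$ for all $(\ell_1,\ell_2)$ if and only if $\Phi\equiv 1$ almost everywhere, which is the asserted identity. The main technical obstacle is the justification of the interchange of summation and integration defining $\Phi$ and the verification that $\Phi$ is integrable; this is exactly where the hypothesis $\phi\in L_2({\bf R}^2)$ enters, through Tonelli, and it is also what licenses the appeal to uniqueness of Fourier series. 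As a minor point, under properties (i) and (ii) the transform $\widehat{\phi}$ is continuous and the series for $\Phi$ converges locally uniformly, so $\Phi$ is continuous and the almost-everywhere conclusion upgrades to equality for every $(\omega_1,\omega_2)\in[-\pi,\pi]^2$, as stated.
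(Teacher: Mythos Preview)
Your argument is correct: this is the standard periodization/Poisson-summation proof that the integer translates of $\phi$ form an orthonormal system if and only if the periodization of $|\widehat{\phi}|^2$ is identically $1$. The paper, however, does not prove Theorem~\ref{Theorem 6.2} at all; it is simply quoted as ``the following well-known necessary and sufficient condition for orthonormality'' and used as a black box to rule out Cases~2--4. So there is nothing to compare against, and your proof supplies exactly the justification that the paper omits. One small caveat: your final remark that the series for $\Phi$ converges locally uniformly (hence $\Phi$ is continuous and the a.e.\ identity upgrades to pointwise) does not follow from properties (i) and (ii) alone without some decay information on $\widehat{\phi}$; in the paper's setting $\phi$ is compactly supported, which does supply the needed decay, but you should say so explicitly rather than attribute it to (i) and (ii).
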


{\bf Case 1:}  We have used Matlab and Mathematica to
check the eigenvalues of the Lawton matrix associated with
this two-parameter family for a large sample of parameters.
The eigenvalue $\lambda=1$ was non-degenerate for every sample
we tested.

{\bf Case 2:}  These solutions are not associated with scaling
functions.
The conditions for Case 2a immediately imply
$\displaystyle
m(e^{i\omega_1},1)=(1+e^{5i\omega_1})/2.$
If we consider the one-dimensional
restriction $\bar{m}(\omega):=m(e^{i\omega},1)$, we see that
$\{\pm{\pi \over 5}, \pm{3\pi \over 5},\pm\pi\}$ are the zeros
of $\bar{m}(\omega)$.  Because $m(e^{i\omega},-1)=0$,
condition (ii) implies that $|\bar{m}(\omega)|^2+
|\bar{m}(\omega+\pi)|^2=1$.  Moreover,
$$
\left|\bar{m}\left(-{3\pi \over 5}+\pi\right)\right|=
\left|\bar{m}\left(-{\pi \over 5}+\pi\right)\right|=
\left|\bar{m}\left({3\pi \over 5}-\pi\right)\right|=
\left|\bar{m}\left({\pi \over 5}-\pi\right)\right|=1.
$$
Because $\left\{\xi_1={2\pi \over 5},\xi_2={4\pi \over 5},\xi_3
=-{2\pi \over 5},\xi_4=-{4\pi \over 5} \right\}$
is a nontrivial cycle in $[-\pi,\pi]$ for the operation
$\xi\rightarrow 2\xi$ mod $2\pi$ such that
$|\bar{m}(\xi_i)|=1$, the set of functions
$\{\bar{\phi}(\cdot-n)\}_{n\in{\bf Z}}$ associated with $\bar{m}(\omega)$ is
not orthonormal and
$$\sum_k |\hat{\bar{\phi}}({2\pi \over 5}+2\pi k)|^2=0$$
(See \cite{D}).  So, for the unrestricted function, we have
$$\sum_{k,l} \left|\hat{\phi}\left({2\pi \over 5}+2\pi k, 2\pi\ell\right)
\right|^2=0$$
which contradicts Theorem 6.2.

The other subcase 2b has the property that
$\displaystyle m(e^{i\omega_1},1)=(1+e^{i3\omega_1})/2$
which similarly excludes it from being associated with scaling functions.

{\bf Case 3:}  This case has the same problematic factors
as Case 2 but with respect to the other component
$\displaystyle m(1,e^{i\omega_2})=(1+e^{i5\omega_2})/2$ or
$\displaystyle m(1,e^{i\omega_2})=(1+e^{i3\omega_2})/2$.

{\bf Case 4:}  The solutions for this final case have the same
factors as in Cases 2 and 3.

So, Case 1 is the only solution associated with scaling functions
since the other cases failed to satisfy the necessary and sufficient
condition for othogonality. Although, the refinable functions for
Cases 2-4 are not orthogonal to their shifts they still have
associated tight frames since they satisfy condition (ii).  These
cases are analogous to the univariate Haar function with support
$[0,3]$.

%%%%%%%%%%%%%%%%%%%%%
\section{The $8\times 8$ Case}
%%%%%%%%%%%%%%%%%%%%

In this section,  We derive
several necessary conditions from the properties
(i)-(iv) with $N=7$ and $M=1$.  We will use these necessary
conditions to show in the next section that the second order
vanishing moment $M=2$
is not possible for this support size.

Let $N=7$ and consider
$m(x,y)=\displaystyle \sum_{i=0}^7\sum_{j=0}^7 c_{ij} x^i y^j$ which
satisfies properties (i)-(iv) with $M=1$.  The symmetry property (iii)
implies that
\begin{equation}
m(x,y)= \left[\begin{array}{c}
1\\ y\\ y^2\\ y^3\\ y^4\\ y^5\\ y^6\\ y^7
\end{array}\right]^T
\left[\begin{array}{cccccccc}
a_0 & b_0 & a_1 & b_1 & a_2 & b_2 & a_3& b_3\\
b_{15} & a_{15} & b_{14} & a_{14} & b_{13} & a_{13} & b_{12}& a_{12}\\
a_4 & b_4 & a_5 & b_5 & a_6 & b_6 & a_7& b_7\\
b_{11} & a_{11} & b_{10} & a_{10} & b_{9} & a_{9} & b_{8}& a_{8}\\
a_8 & b_8 & a_9 & b_9 & a_{10} & b_{10} & a_{11}& b_{11}\\
b_{7} & a_{7} & b_{6} & a_{6} & b_{5} & a_{5} & b_{4}& a_{4}\\
a_{12} & b_{12} & a_{13} & b_{13} & a_{14} & b_{14} & a_{15}& b_{15}\\
b_{3} & a_{3} & b_{2} & a_{2} & b_{1} & a_{1} & b_{0}& a_{0}
\end{array}\right]
\left[\begin{array}{c}
1\\ x\\ x^2\\ x^3\\ x^4\\ x^5\\ x^6\\ x^7
\end{array}\right].\label{eq1}
\end{equation}

Properties (i) and (iii)  imply that
\begin{equation}
\sum_{i=0}^{15} \sum_{\nu=a,b} \nu_i=\frac{1}{2}. \label{(8.1)}
\end{equation}

Property (ii) implies the following 25 nonlinear equations:
\begin{eqnarray}
&&\sum_{\nu=a,b}{} \nu_0\nu_{15} =0 \label{(8.2)}\\
&&\sum_{\nu=a,b}{} \nu_3 \nu_{12} =0 \label{(8.3)}\\
&&\sum_{\nu=a,b}{} \nu_0\nu_{11}+\nu_4\nu_{15}= 0 \label{(8.4)}\\
&&\sum_{\nu=a,b}{} \nu_0\nu_{14}+\nu_1\nu_{15}=0 \label{(8.5)}\\
&&\sum_{\nu=a,b}{} \nu_2\nu_{12}+\nu_3\nu_{13}=0 \label{(8.6)}\\
&&\sum_{\nu=a,b}{} \nu_3\nu_8+\nu_7\nu_{12}=0 \label{(8.7)}\\
&&\sum_{\nu=a,b}{} \nu_0\nu_7+\nu_4\nu_{11}+\nu_8\nu_{15}=0 \label{(8.8)}\\
&&\sum_{\nu=a,b}{} \nu_0\nu_{13}+\nu_1\nu_{14}+\nu_2\nu_{15} =0 \label{(8.9)}\\
&&\sum_{\nu=a,b}{} \nu_1\nu_{12}+\nu_2\nu_{13}+\nu_3\nu_{14}=0 \label{(8.10)}\\
&&\sum_{\nu=a,b}{} \nu_3\nu_4+\nu_7\nu_8+\nu_{11}\nu_{12}=0 \label{(8.11)}\\
&&\sum_{\nu=a,b}{} \nu_0\nu_3+\nu_4\nu_7+\nu_8\nu_{11}+\nu_{12}\nu_{15}=0 \label{(8.12)}\\
&&\sum_{\nu=a,b}{} \nu_0\nu_{10}+\nu_1\nu_{11}+\nu_4\nu_{14}+\nu_5\nu_{15}=0 \label{(8.13)}\\
&&\sum_{\nu=a,b}{} \nu_0\nu_{12}+\nu_1\nu_{13}+\nu_2\nu_{14}+\nu_3\nu_{15}=0 \label{(8.14)}\\
&&\sum_{\nu=a,b}{} \nu_2\nu_8+\nu_3\nu_9+\nu_6\nu_{12}+\nu_7\nu_{13}=0 \label{(8.15)}\\
&&\sum_{\nu=a,b}{} \nu_0\nu_6+\nu_1\nu_7+\nu_4\nu_{10}+\nu_5\nu_{11}+\nu_8\nu_{14}+\nu_9\nu_{15}=0 \label{(8.16)}\\
&&\sum_{\nu=a,b}{} \nu_0\nu_9+\nu_1\nu_{10}+\nu_2\nu_{11}+\nu_4\nu_{13}+\nu_5\nu_{14}+\nu_6\nu_{15}=0 \label{(8.17)}\\
&&\sum_{\nu=a,b}{} \nu_1\nu_8+\nu_2\nu_9+\nu_3\nu_{10}+\nu_5\nu_{12}+\nu_6\nu_{13}+\nu_7\nu_{14}=0 \label{(8.18)}\\
&&\sum_{\nu=a,b}{} \nu_2\nu_4+\nu_3\nu_5+\nu_6\nu_8+\nu_7\nu_9+\nu_{10}\nu_{12}+\nu_{11}\nu_{13}=0 \label{(8.19)}\\
&&\sum_{\nu=a,b}{} \nu_0\nu_8+\nu_1\nu_9+\nu_2\nu_{10}+\nu_3\nu_{11}+\nu_4\nu_{12}+\nu_5\nu_{13}+\nu_6\nu_{14}
\hspace{.25in}\nonumber\\
&&\hspace{3in}+\nu_7\nu_{15}=0 \label{(8.20)}\\ \nonumber\\
&&\sum_{\nu=a,b}{} \nu_0\nu_2+\nu_1\nu_3+\nu_4\nu_6+\nu_5\nu_7+\nu_8\nu_{10}+\nu_9\nu_{11}+\nu_{12}\nu_{14}
\hspace{.25in}\nonumber\\
&&\hspace{3in}+\nu_{13}\nu_{15}=0 \label{(8.21)}\\ \nonumber\\
&&\sum_{\nu=a,b}{} \nu_0\nu_5+\nu_1\nu_6+\nu_2\nu_7+\nu_4\nu_9+\nu_5\nu_{10}+
\nu_6\nu_{11}+\nu_8\nu_{13}+\nu_9\nu_{14}
\hspace{.25in}\nonumber\\
&&\hspace{3in}+\nu_{10}\nu_{15}=0 \label{(8.22)}\\ \nonumber\\
&&\sum_{\nu=a,b}{} \nu_1\nu_4+\nu_2\nu_5+\nu_3\nu_6+\nu_5\nu_8+\nu_6\nu_9+
\nu_7\nu_{10}+\nu_9\nu_{12}+\nu_{10}\nu_{13}
\hspace{.25in}\nonumber\\
&&\hspace{3in}+\nu_{11}\nu_{14} =0 \label{(8.23)}\\ \nonumber\\
&&\sum_{\nu=a,b}{} \nu_0\nu_1+\nu_1\nu_2+\nu_2\nu_3+\nu_4\nu_5+\nu_5\nu_6+
\nu_6\nu_7+\nu_8\nu_9+\nu_9\nu_{10}+\nu_{10}\nu_{11}
\hspace{.25in}\nonumber\\
&&\hspace{2in}+\nu_{12}\nu_{13}+\nu_{13}\nu_{14}+\nu_{14}\nu_{15} =0 \label{(8.24)}\\ \nonumber\\
&&\sum_{\nu=a,b}{} \nu_0\nu_4+\nu_1\nu_5+\nu_2\nu_6+\nu_3\nu_7+
\nu_4\nu_8+\nu_5\nu_9 +\nu_6\nu_{10}+\nu_7\nu_{11}+\nu_8\nu_{12}
\hspace{.25in}\nonumber\\
&&\hspace{2in}+\nu_9\nu_{13}+\nu_{10}\nu_{14}+\nu_{11}\nu_{15}=0  \label{(8.25)}\\ \nonumber\\
 \displaystyle &&\sum_{i=0}^{15}{} \sum_{\nu=a,b} \nu_i^2 =\frac{1}{8}.
\label{(8.26)}
\end{eqnarray}

Property (iv) with $M=1$ implies

\begin{eqnarray}
a_0+a_1+a_2+a_3&=&b_0+b_1+b_2+b_3 \label{(8.27)}\\
a_4+a_5+a_6+a_7&=&b_4+b_5+b_6+b_7 \label{(8.28)}\\
a_8+a_9+a_{10}+a_{11}&=&b_8+b_9+b_{10}+b_{11} \label{(8.29)}\\
a_{12}+a_{13}+a_{14}+a_{15} &=& b_{12}+b_{13}+b_{14}+b_{15} \label{(8.30)}\\
a_0+a_4+a_8+a_{12}&=&b_3+b_7+b_{11}+b_{15} \label{(8.31)} \\
a_1+a_5+a_9+a_{13}&=&b_2+b_6+b_{10}+b_{14} \label{(8.32)} \\
a_2+a_6+a_{10}+a_{14}&=&b_1+b_5+b_9+b_{13} \label{(8.33)} \\
a_3+a_7+a_{11}+a_{15}&=&b_0+b_4+b_8+b_{12}. \label{(8.34)}
\end{eqnarray}
Using (\ref{(8.27)})-(\ref{(8.30)}) and (\ref{(8.1)}), we immediately have
\begin{equation}
\sum_{i=0}^{15}a_i=\sum_{i=0}^{15} b_i=\frac{1}{4}. \label{(8.35)}
\end{equation}

Next, we use various combinations of the nonlinear equations to make
perfect squares as we have done previously.
This enables us to introduce parameters in order to
simplify these equations.

Using (\ref{(8.26)}), (\ref{(8.25)}), (\ref{(8.20)}),
and (\ref{(8.14)}),
we have
\begin{eqsplitmath}
\sum_{\nu=a,b} (\nu_0+\nu_4+\nu_8+\nu_{12})^2+
(\nu_1+\nu_5+\nu_9+\nu_{13})^2 \\
 +(\nu_2+\nu_6+\nu_{10}+\nu_{14})^2+
(\nu_3+\nu_7+\nu_{11}+\nu_{15})^2 ={1\over 8}.
\label{(9.5)}
\end{eqsplitmath}

Moreover, using (\ref{(8.5)}), (\ref{(8.6)}), (\ref{(8.13)}), (\ref{(8.15)}),
(\ref{(8.16)}), (\ref{(8.19)}), (\ref{(8.21)}), we have
\begin{eqsplitmath}
\sum_{\nu=a,b} (\nu_0+\nu_4+\nu_8+\nu_{12})(\nu_2+\nu_6+\nu_{10}+\nu_{14}) \\
+(\nu_1+\nu_5+\nu_9+\nu_{13})(\nu_3+\nu_7+\nu_{11}+\nu_{15}) =0.
\label{(9.6)}
\end{eqsplitmath}

After using (\ref{(8.31)})-(\ref{(8.34)}), (\ref{(9.5)}), and (\ref{(9.6)}),
we obtain
\begin{eqsplitmath}
((a_0+a_4+a_8+a_{12})\pm(a_2+a_6+a_{10}+a_{14}))^2 \\
+((a_1+a_5+a_9+a_{13})\pm(a_3+a_7+a_{11}+a_{15}))^2 =\frac{1}{8}.
\label{(9.6b)}
\end{eqsplitmath}
Choosing the plus sign in equation (\ref{(9.6b)}) and using (\ref{(8.35)})
yields
\begin{eqnarray}
a_0+a_4+a_8+a_{12}+a_2+a_6+a_{10}+a_{14}&=& r_0, \\
a_1+a_5+a_9+a_{13}+a_{3}+a_{7}+a_{11}+a_{15}&=&s_0,
\end{eqnarray}
where $r_0+s_0=1/4$ and $r_0s_0=0$.  Giving us four cases:
$r_0,s_0=1/4$ or $0$.

Similarly, using (\ref{(8.26)}), (\ref{(8.24)}), (\ref{(8.21)}),
and (\ref{(8.12)}), we have
\begin{eqsplitmath}
\sum_{\nu=a,b} (\nu_0+\nu_1+\nu_2+\nu_3)^2+(\nu_4+\nu_5+\nu_6+\nu_7)^2\\
+(\nu_8+\nu_9+\nu_{10}+\nu_{11})^2+(\nu_{12}+\nu_{13}+\nu_{14}+\nu_{15})^2 ={1\over 8}.
\label{(9.1)}
\end{eqsplitmath}
Using (\ref{(8.20)}), (\ref{(8.18)}), (\ref{(8.17)}), (\ref{(8.15)}),
(\ref{(8.13)}), (\ref{(8.7)}), (\ref{(8.4)}), we have
\begin{eqsplitmath}
\sum_{\nu=a,b} (\nu_0+\nu_1+\nu_2+\nu_3)(\nu_8+\nu_9+\nu_{10}+\nu_{11})\\
+(\nu_4+\nu_5+\nu_6+\nu_7)(\nu_{12}+\nu_{13}+\nu_{14}+\nu_{15})=0. \label{(9.2)}
\end{eqsplitmath}
In a similar fashion as before, we have
\begin{eqnarray}
a_0+a_1+a_2+a_3+ a_8+a_9+a_{10}+a_{11}&=&t_0,\\
a_4+a_5+a_6+a_7 +a_{12}+a_{13}+a_{14}+a_{15}&=&u_0,
\end{eqnarray}
where $t_0+u_0=1/4$ and $t_0u_0=0$.
We now refine our solutions to sums of four coefficients.
Choosing the minus sign in equation (\ref{(9.6b)}) yields
\begin{eqnarray*}
a_0+a_4+a_8+a_{12} &=&\frac{1}{2}(r_0+r_1),\ \
a_2+a_6+a_{10}+a_{14}\ =\ \frac{1}{2}(r_0-r_1) \\
a_1+a_5+a_9+a_{13} &=&\frac{1}{2}(s_0+s_1),\ \
a_{3}+a_{7}+a_{11}+a_{15}\ =\ \frac{1}{2}(s_0-s_1),
\end{eqnarray*}
where $r_1=\frac{1}{4}\cos\alpha$ and $s_1=\frac{1}{4}\sin\alpha$.
Furthermore, equations (\ref{(8.2)}), (\ref{(8.3)}), (\ref{(8.4)}),
(\ref{(8.7)}), (\ref{(8.8)}), (\ref{(8.11)}), and (\ref{(8.12)}) yield
$$
\sum (a_0+a_4+a_8+a_{12})(a_{3}+a_{7}+a_{11}+a_{15}) =0. \label{(9.7)}
$$
This together with (\ref{(8.31)}) and (\ref{(8.34)}), give the following
constraint on our parameters
\begin{equation}
2(r_0+r_1)(s_0-s_1)=0.\label{rseq}
\end{equation}
Because of the relationship between $r_1$ and $s_1$, equation (\ref{rseq}) produces
three cases:  $r_1=r_0$ and  $s_1=s_0$, $r_1=-r_0$ and  $s_1=s_0$, or
$r_1=-r_0$ and  $s_1=-s_0$.

Similarly,
\begin{eqnarray*}
a_0+a_1+a_2+a_3 &=&\frac{1}{2}(t_0+t_1),\ \
a_8+a_9+a_{10}+a_{11}\ =\ \frac{1}{2}(t_0-t_1) \\
a_4+a_5+a_6+a_7 &=&\frac{1}{2}(u_0+u_1), \ \
a_{12}+a_{13}+a_{14}+a_{15}\ =\ \frac{1}{2}(u_0-u_1),
\end{eqnarray*}
where
$t_1=\frac{1}{4}\cos \beta$ and $u_1=\frac{1}{4}\sin \beta$.
Additionally, equations (\ref{(8.2)}),  (\ref{(8.3)}),  (\ref{(8.5)}),
(\ref{(8.9)}), (\ref{(8.10)}),  and (\ref{(8.14)}) with
(\ref{(8.27)}) and (\ref{(8.30)}), gives us $(t_0+t_1)(u_0-u_1)=0$.
In summary, we have the following lemma
\begin{lemma}
\label{lems4}
$$
\begin{array}{ll}
a_0+a_4+a_8+a_{12} \ =\ \frac{1}{2}(r_0+r_1),&
b_0+b_4+b_8+b_{12}\ =\ \frac{1}{2}(s_0-s_1),\\
a_1+a_5+a_9+a_{13}\ =\ \frac{1}{2}(s_0+s_1),&
b_1+b_5+b_9+b_{13}\ =\ \frac{1}{2}(r_0-r_1)\\
a_2+a_6+a_{10}+a_{14}\ =\ \frac{1}{2}(r_0-r_1),&
b_2+b_6+b_{10}+b_{14}\ =\ \frac{1}{2}(s_0+s_1),\\
a_{3}+a_{7}+a_{11}+a_{15}\ =\ \frac{1}{2}(s_0-s_1),&
b_{3}+b_{7}+b_{11}+b_{15}\ =\ \frac{1}{2}(r_0+r_1),\\
a_0+a_1+a_2+a_3 \ =\  \frac{1}{2}(t_0+t_1),&
b_0+b_1+b_2+b_3 \ =\ \frac{1}{2}(t_0+t_1),\\
a_4+a_5+a_6+a_7 \ =\ \frac{1}{2}(u_0+u_1),&
b_4+b_5+b_6+b_7 \ =\ \frac{1}{2}(u_0+u_1),\\
a_8+a_9+a_{10}+a_{11}\ =\  \frac{1}{2}(t_0-t_1),&
b_8+b_9+b_{10}+b_{11}\ =\  \frac{1}{2}(t_0-t_1),\\
a_{12}+a_{13}+a_{14}+a_{15}\ = \  \frac{1}{2}(u_0-u_1),&
b_{12}+b_{13}+b_{14}+b_{15}\ =\ \frac{1}{2}(u_0-u_1).
\end{array}
$$
where
\begin{eqnarray*}
r_0+s_0=\frac{1}{4}, \ \ r_0s_0=0, \ \ (r_0+r_1)(s_0-s_1)=0,
\ \ r_1=\frac{1}{4}\cos \alpha, \ \ s_1\ =\ \frac{1}{4}\sin \alpha,\\
 t_0+u_0=\frac{1}{4} ,  \ \ t_0u_0=0 ,  \ \ (t_0+t_1)(u_0-u_1)=0 ,
\ \  t_1=\frac{1}{4}\cos \beta , \ \  u_1\ =\ \frac{1}{4}\sin \beta.
\end{eqnarray*}
\end{lemma}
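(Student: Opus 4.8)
The plan is to establish Lemma~\ref{lems4} by the same perfect-square technique that produced Lemmas~\ref{Lemma 2.2} and~\ref{Lemma 2.3} in the $6\times 6$ case: I combine carefully chosen subsets of the $25$ quadratic orthogonality relations (\ref{(8.2)})--(\ref{(8.26)}) with the linear first-moment relations (\ref{(8.27)})--(\ref{(8.34)}) so that the cross terms telescope away and what remains is a sum of squares equal to a constant, which I then read off as a circle parameterization. First I would record the global identity (\ref{(8.35)}): adding (\ref{(8.27)})--(\ref{(8.30)}) forces $\sum a_i=\sum b_i$, and invoking (\ref{(8.1)}) pins both totals to $\frac{1}{4}$.

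Next I treat the ``column'' direction. Adding (\ref{(8.14)}), (\ref{(8.20)}), (\ref{(8.25)}), and (\ref{(8.26)}) should leave only the four squares of the strip sums $\nu_0+\nu_4+\nu_8+\nu_{12}$, $\nu_1+\nu_5+\nu_9+\nu_{13}$, $\nu_2+\nu_6+\nu_{10}+\nu_{14}$, $\nu_3+\nu_7+\nu_{11}+\nu_{15}$, giving (\ref{(9.5)}); the companion subset (\ref{(8.5)}), (\ref{(8.6)}), (\ref{(8.13)}), (\ref{(8.15)}), (\ref{(8.16)}), (\ref{(8.19)}), (\ref{(8.21)}) produces the mixed-product identity (\ref{(9.6)}). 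Completing the square with a $\pm$ sign and substituting the symmetry relations (\ref{(8.31)})--(\ref{(8.34)}) yields (\ref{(9.6b)}). The plus sign together with (\ref{(8.35)}) forces $r_0+s_0=\frac{1}{4}$ and $r_0s_0=0$, while the minus sign expresses the strip differences as $\frac{1}{4}\cos\alpha$ and $\frac{1}{4}\sin\alpha$, which I name $r_1,s_1$. Running the identical argument in the ``row'' direction --- combining (\ref{(8.12)}), (\ref{(8.21)}), (\ref{(8.24)}), (\ref{(8.26)}) to get (\ref{(9.1)}) and (\ref{(8.4)}), (\ref{(8.7)}), (\ref{(8.13)}), (\ref{(8.15)}), (\ref{(8.17)}), (\ref{(8.18)}), (\ref{(8.20)}) to get (\ref{(9.2)}) --- delivers $t_0+u_0=\frac{1}{4}$, $t_0u_0=0$, and $t_1=\frac{1}{4}\cos\beta$, $u_1=\frac{1}{4}\sin\beta$.

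To pin down the remaining compatibility constraints I would exploit the relations involving products of the two outermost strips. The combination (\ref{(8.2)}), (\ref{(8.3)}), (\ref{(8.4)}), (\ref{(8.7)}), (\ref{(8.8)}), (\ref{(8.11)}), (\ref{(8.12)}) should collapse to the vanishing of the product of the two outer column strips; feeding in (\ref{(8.31)}) and (\ref{(8.34)}) then gives (\ref{rseq}), that is $(r_0+r_1)(s_0-s_1)=0$. Symmetrically, (\ref{(8.2)}), (\ref{(8.3)}), (\ref{(8.5)}), (\ref{(8.9)}), (\ref{(8.10)}), (\ref{(8.14)}) with (\ref{(8.27)}) and (\ref{(8.30)}) yields $(t_0+t_1)(u_0-u_1)=0$. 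Finally, every $b$-sum in the statement comes for free: substituting the four $a$-strip values into the first-moment identities (\ref{(8.31)})--(\ref{(8.34)}) reads off $b_3+b_7+b_{11}+b_{15}=\frac{1}{2}(r_0+r_1)$ and its three companions, and (\ref{(8.27)})--(\ref{(8.30)}) give the $b$-row sums directly.

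The main obstacle I anticipate is purely combinatorial bookkeeping: verifying that each prescribed subset of the quadratic relations really does telescope into the claimed perfect square with no residual cross terms, and keeping the two $\pm$ choices in (\ref{(9.6b)}) consistent so that the plus branch fixes $r_0,s_0$ while the minus branch defines $r_1,s_1$. Once the telescoping is confirmed, the circle parameterizations and the transfer to the $b$-coefficients are routine, so the risk lies entirely in the correctness of the index combinations rather than in any conceptual step.
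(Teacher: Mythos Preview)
Your proposal is correct and follows the paper's argument essentially step for step: the same subsets of (\ref{(8.2)})--(\ref{(8.26)}) are combined to produce (\ref{(9.5)}), (\ref{(9.6)}), (\ref{(9.1)}), (\ref{(9.2)}), the $\pm$ branches of (\ref{(9.6b)}) are read off exactly as in the text to define $r_0,s_0,r_1,s_1$ (and analogously $t_0,u_0,t_1,u_1$), and the outer-strip products (\ref{(8.2)}), (\ref{(8.3)}), (\ref{(8.4)}), (\ref{(8.7)}), (\ref{(8.8)}), (\ref{(8.11)}), (\ref{(8.12)}) and their row analogues give the compatibility constraints (\ref{rseq}) and $(t_0+t_1)(u_0-u_1)=0$. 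Your closing remark about transferring to the $b$-sums via (\ref{(8.27)})--(\ref{(8.34)}) is also how the paper implicitly obtains the right-hand column of the lemma.
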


Because there are no symmetric compactly-supported tensor-product
wavelets with more than one vanishing moment, it is natural to ask
whether there are any nonseparable symmetric solutions with multiple
vanishing moments.  In \cite{LR1}, it is shown that their are no
symmetric solutions with higher vanishing moments for $m(x,y)$ with
$N=7$.   Although the bivariate case allows enough freedom to
generate a family of symmetric solutions, it does not allow for
multiple vanishing moments at least for the support size we have
considered. So, compact support, orthogonality, vanishing moments,
and symmetry are again at odds in the construction of bivariate
wavelets.
%% End of article:

%% optional
% Appendixes

% Appendix without title:
%\appendix{}

% Appendix with title:
%\appendix{Title}

% Appendix with letter:
%\appendix{B}

% Appendix with letter and title:
%\appendix{C}
%\appendixtitle{This is an appendix title}

%% optional
%\begin{acknowledgment}
%text...
%\end{acknowledgment}

%% not optional:

%% This command is necessary! ==>>
\end{article}
\end{document}